\newcommand{\df}{\stackrel{\rm def}{=}}
\newcommand{\Hom}{{\rm Hom}}
\newcommand{\rn}{\boldsymbol}
\newcommand{\scr}{\mathcal}
\newcommand{\eval}[2]{\llbracket #1 \rrbracket_{#2}}
\newtheorem{theorem}{Theorem}[section]
\newtheorem{lemma}[theorem]{Lemma}
\newtheorem{claim}[theorem]{Claim}
\newtheorem{conjecture}[theorem]{Conjecture}
\newcommand{\beginproof}{\medskip\noindent{\bf Proof.~}}
\newcommand{\beginproofof}[1]{\medskip\noindent{\bf Proof of #1.~}}
\newcommand{\beginproofdotless}{\medskip\noindent{\bf Proof}}
\newcommand{\finishproof}{\hspace{0.2ex}\rule{1ex}{1ex}}
\newenvironment{proof}{\beginproof}{\unskip\nolinebreak\finishproof\par\medskip}
\newcommand{\hh}{h}
\newcommand{\hhh}[1]{h^{(#1)}}
\begin{document}


\newcommand{\case}[1]{\medskip\noindent{\bf Case #1} }
\newcommand{\beq}[1]{\begin{equation}\label{#1}}
\newcommand{\eeq}{\end{equation}}
\newcommand{\req}[1]{(\ref{#1})}

\newcommand{\ex}{\mathrm{ex}}
\newcommand{\C}[1]{{\scr #1}}
\newcommand{\B}[1]{{\bf #1}}
\newcommand{\I}[1]{{\mathbb #1}}
\renewcommand{\O}[1]{\overline{#1}}
\newcommand{\ceil}[1]{\lceil #1\rceil}
\newcommand{\e}{\varepsilon}
\newcommand{\floor}[1]{\lfloor #1\rfloor}
\newcommand{\me}{{\mathrm e}}
\renewcommand{\mid}{:}
\newcommand{\rcpc}[1][nothing]{\ifthenelse{\equal{#1}{nothing}}
 {\cite{razborov:08}}{\cite[#1]{razborov:08}}}
\newcommand{\rjsl}[1][nothing]{\ifthenelse{\equal{#1}{nothing}}
 {\cite{razborov:07}}{\cite[#1]{razborov:07}}}
\newif\ifnotesw\noteswtrue
\newcommand{\comment}[1]{\ifnotesw $\blacktriangleright$\ {\sf #1}\
  $\blacktriangleleft$ \fi}
\newcommand{\hide}[1]{}

\bibliographystyle{amsalpha}

\title{Asymptotic Structure of Graphs with\\ the Minimum Number of Triangles}

\author{Oleg Pikhurko\footnote{Supported by ERC
grant~306493 and EPSRC grant~EP/K012045/1.}\\
Mathematics Institute and DIMAP\\
University of Warwick\\
Coventry CV4 7AL, UK
\and
Alexander Razborov\footnote{Part of this work was done while
the author was at Steklov Mathematical Institute, supported by the Russian Foundation
for Basic Research, and at Toyota Technological Institute, Chicago.}\\
Department of Computer Science\\
University of Chicago\\
Chicago, IL 60637
}

\maketitle

\begin{abstract}
 We consider the problem of minimizing the  number of triangles in
a graph of given order and size and
describe the asymptotic structure of extremal graphs.
This is achieved by characterizing the set of flag algebra homomorphisms
that minimize the triangle density.
 \end{abstract}

\section{Introduction}

The famous theorem of Tur\'an~\cite{turan:41} determines $\ex(n,K_r)$,
the maximum number of edges in a graph with $n$ vertices that does not
contain the $r$-clique $K_r$ (the case $r=3$ was previously solved by
 Mantel~\cite{mantel:07}). The
unique extremal graph is the \emph{Tur\'an graph} $T_{r-1}(n)$, the
complete $(r-1)$-partite graph of order $n$ whose part sizes differ at
most by 1. Thus, for fixed $r$, we have $\ex(n,K_r)=(1-\frac1{r-1}+o(1))
{n\choose 2}$.

Rademacher (unpublished, 1941) proved that a graph with $\ex(n,K_3)+1$
edges has at least $\floor{n/2}$ triangles. This prompted Erd\H
os~\cite{erdos:55} to pose the more general problem:
what is $g_r(m,n)$, the smallest number of $K_r$-subgraphs in a graph with
$n$ vertices and $m$ edges?
Various results have been obtained by
Erd\H os~\cite{erdos:62,erdos:69:cpm},
Moon and Moser~\cite{moon+moser:62},
Nordhaus and Stewart~\cite{nordhaus+stewart:63},
Bollob\'as~\cite{bollobas:76}, Fisher~\cite{fisher:89},
Lov\'asz and Simonovits~\cite{lovasz+simonovits:76,lovasz+simonovits:83},
Razborov \cite{razborov:07,razborov:08}, Nikiforov~\cite{nikiforov:11},
Reiher~\cite{reiher:Kr}, and others.

Let us consider the asymptotic question, that is,
what is the limit
 $$
 g_r(a)\df\lim_{n\to\infty} \frac{g_r\!\left(\floor{a{n\choose 2}},n\right)}{{n\choose r}}
 $$
 for any given $a\in [0,1]$ and $r$? While it is not difficult to show that the limit
exists, determining $g_r(a)$ is a much harder task that was accomplished
only recently
(for $r=3$ by Razborov~\cite{razborov:08}, for $r=4$ by
Nikiforov~\cite{nikiforov:11}, and for $r\ge 5$ by
Reiher~\cite{reiher:Kr}).

The following construction gives the value of $g_3(a)$ (as well as $g_r(a)$
for every $r\ge 4$).
Given $a\in (0,1)$, we choose integer $t\ge 1$ and real $c\in \left[ \frac 1{t+1}, \frac 1t\right)$ such that the complete $(t+1)$-partite graph
of order $n\to\infty$ with $t$ largest parts each of size $(c+o(1))n$ has edge density $a+o(1)$. Formally,
let integer $t\ge 1$ satisfy
 \beq{t}
 a\in \left(1-\frac1t,1-\frac1{t+1}\right]
 \eeq
 and let real
 \beq{CExplicit}
 c=\frac{t+\sqrt{t(t-a(t+1))}}{t(t+1)}
 \eeq
 be the (unique) root of the quadratic equation
 \beq{c}
 2\left({t\choose 2}c^2+tc(1-tc)\right)=a
 \eeq
 with $c\ge \frac1{t+1}$.
 \hide{Note that
the left-hand side of~\req{c} is strictly decreasing when
$c\ge \frac1{t+1}$ and assumes value $1-\frac1{t+1}\ge a$ when
$c=\frac1{t+1}$ and value $1-\frac1t\le a$ when $c=\frac1t$. (So we have
in fact $c\le
\frac1t$.)}
 Since $a> 1-\frac1t$, it follows from~\req{CExplicit} (or from \req{c}) that $c<
\frac1t$.
Partition the vertex set $[n]=\{1,\dots,n\}$ into $t+1$ non-empty parts
$V_1,\dots,V_{t+1}$ with
$|V_1|=\dots=|V_t|=\floor{cn}$ for $i\in [t]$. Let $G$ be obtained from the
complete $t$-partite graph $K(V_1,\dots,V_{t-1},U)$, where $U=V_{t}\cup V_{t+1}$,
by adding an arbitrary triangle-free graph $G[U]$ on $U$ with $|V_t|\,|V_{t+1}|$
edges\footnote{One possible choice is to take $G[U]=K(V_t,V_{t+1})$, resulting in
$G=K(V_1,\dots,V_{t+1})$. But since each edge of $G[U]$ belongs to exactly
$|V_1|+\dots+|V_{t-1}|$ triangles, the choice of $G[U]$ has no
effect on the triangle density.}. Clearly, the edge density of $G$ is
$a+o(1)$. Thus $g_3(a)\le \hh(a)$, where
 \beq{gr}
 \hh(a)\df 6\left({t\choose 3}c^3+{t\choose 2}c^{2}(1-tc)\right).
 \eeq
 If $a=1$, we let $G$ be the complete graph $K_n$ and define $\hh(1)=1$.
If $a=0$, we take the empty graph and let $h(0)=0$.
For $a\in [0,1]$, let $\C H_{a,n}$ be the set of all possible graphs $G$
on $[n]$ that arise in this way,
$\C H_a\df\cup_{n\in\I N}\C H_{a,n}$, and $\C H\df\cup_{a\in[0,1]} \C H_a$.
In general, $\C H_{a,n}$ has many non-isomorphic graphs and this
seems to be one of the reasons why this
extremal problem is so difficult.

Although each of the papers~\cite{razborov:08,nikiforov:11,reiher:Kr} implies
the lower bound $g_3(a)\ge \hh(a)$, it is not clear how to extract the structural
information about extremal graphs from these proofs.
Here we partially fill this gap by showing that, modulo
changing a negligible proportion of adjacencies, the set $\C H$ consists of all
almost extremal graphs for the $g_3$-problem. Here is the formal statement.

\begin{theorem}\label{th:comb} For every $\e>0$ there are $\delta>0$ and $n_0$ such
that every graph $G$ with $n\ge n_0$ vertices and at most $(g_3(a)+\delta){n\choose 3}$ triangles, where $a=e(G)/{n\choose 2}$,
can be made isomorphic to some graph in $\C H_{a,n}$
by changing at most $\e {n\choose 2}$
adjacencies.\end{theorem}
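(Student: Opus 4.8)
The plan is to argue by contradiction via graph limits: a failure of the theorem produces a convergent sequence whose limit is an extremal graphon, classify all such extremal graphons (this is where flag algebras enter), and then transfer the structure back to the finite graphs in the sequence.

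\medskip

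Suppose Theorem~\ref{th:comb} fails for some $\e>0$. Then there is a sequence of graphs $G_n$ with $|V(G_n)|\to\infty$, edge densities $a_n=e(G_n)/\binom{|V(G_n)|}{2}$, at most $(g_3(a_n)+o(1))\binom{|V(G_n)|}{3}$ triangles, such that $G_n$ is at edit distance more than $\e\binom{|V(G_n)|}{2}$ from every graph in $\C H_{a_n,|V(G_n)|}$. Passing to a subsequence, $a_n\to a$ and $G_n$ converges in the cut metric to a graphon $W$. Continuity of subgraph densities and of $\hh$, the identity $g_3=\hh$, and the lower bound $g_3\ge\hh$ of \cite{razborov:08} (applied to finite samples of $W$) together give that $W$ has edge density $a$ and triangle density exactly $\hh(a)$; i.e.\ $W$ minimizes triangle density among graphons of edge density $a$.

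\medskip

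The core step is to classify these minimizers. The assertion is: for the integer $t$ and real $c$ attached to $a$ by \req{t} and \req{CExplicit}, every minimizer $W$ is, after a measure-preserving relabelling of $[0,1]$, of the following form --- there is a partition $[0,1]=A_1\sqcup\dots\sqcup A_{t-1}\sqcup B$ with $\lambda(A_i)=c$ for each $i$, such that $W\equiv 1$ a.e.\ on $A_i\times A_j$ for $i\ne j$ and on $A_i\times B$, $W\equiv 0$ a.e.\ on $A_i\times A_i$, and $W|_{B\times B}$ is a triangle-free graphon of the edge density forced by \req{c}. This is exactly the graphon incarnation of $\C H$: a complete $(t-1)$-partite blow-up, completely joined to a part $B=U$ carrying an \emph{arbitrary} triangle-free pattern of the prescribed density --- so the minimizer is genuinely non-unique (for instance when $t=1$ the $B$-part may be a blow-up of $C_5$ as well as a complete bipartite graphon). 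A direct triangle count (using that a triangle-free $B$-part contributes no triangle with all three vertices inside $B$) confirms that every graphon of this form does attain $\hh(a)$, so the description is sharp. The proof of the classification is where flag algebras are used: one starts from the Cauchy--Schwarz / sum-of-squares certificate that proves $g_3\ge\hh$ in \cite{razborov:08} and extracts its equality conditions. Tightness forces, for almost every vertex $x$, that the link graphon $W(x,\cdot)$ has a rigid multipartite-like shape; an exchange argument on the degree function and on the distribution of link-types then isolates the $t-1$ mutually complete classes of measure $c$ and the remaining triangle-free class, with the edge count pinned down by \req{c}.

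\medskip

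To transfer back to $G_n$: using cut-metric convergence, partition $V(G_n)$ into sets $\widetilde A_1^n,\dots,\widetilde A_{t-1}^n,\widetilde B^n$ of sizes within $o(n)$ of those of $A_1,\dots,A_{t-1},B$ and aligned with the optimal overlay. Since every block density of $W$ that is constrained is $0$ or $1$, the cut norm forces all but $o(n^2)$ pairs between distinct $\widetilde A$-classes, and between $\widetilde A$-classes and $\widetilde B^n$, to be edges, and all but $o(n^2)$ pairs inside each $\widetilde A_i^n$ to be non-edges; insert or delete these $o(n^2)$ exceptional pairs and move $o(n)$ vertices to attain the exact part sizes demanded by $\C H_{a_n,n}$. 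It remains to repair $G_n[\widetilde B^n]$: it converges to the triangle-free graphon $W|_{B\times B}$, hence has $o(n^3)$ triangles, so by the triangle removal lemma $o(n^2)$ edge deletions make it triangle-free, after which a routine adjustment of $o(n^2)$ further edges (there is room strictly below the Tur\'an bound in the interior of the interval \req{t}, while in the boundary cases $a=1-\tfrac1s$ the triangle-free part is rigid --- empty, or balanced complete bipartite by the equality case of Tur\'an's theorem and its stability version) brings its size to exactly $|V_t|\,|V_{t+1}|$. The resulting graph is isomorphic to a member of $\C H_{a_n,n}$ and differs from $G_n$ in $o(n^2)$ adjacencies, contradicting the choice of $G_n$ once $n$ is large.

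\medskip

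The one genuinely hard step is the classification of minimizers: turning an optimal-value flag-algebra certificate into a \emph{tight} description of its whole extremal set. This is delicate here because the certificate varies with $a$ (through $t$ and $c$) and the extremal set is a positive-dimensional family rather than a single graphon, so the vanishing conditions must be organized so as to recover all of $\C H$ and nothing more. The remaining ingredients --- compactness, the cut-norm cleanup, the triangle removal lemma, and the edge-count bookkeeping needed to land \emph{exactly} in $\C H_{a,n}$ --- are routine.
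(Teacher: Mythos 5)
Your proposal is correct and follows essentially the same route as the paper: pass to a convergent (sub)sequence, use the flag-algebra (equivalently, graphon) characterization of triangle-minimizers to identify the limit object as an element of $\Phi$ --- this is precisely the paper's Theorem~\ref{th:Phi} --- and then transfer the multipartite structure back to $G_n$ via cut-metric alignment and the triangle removal lemma. One small caveat: your final ``routine adjustment of $o(n^2)$ further edges'' to bring the triangle-free part up to the exact required edge count is not entirely trivial when edges must be \emph{added}; the paper isolates this as a separate statement (Lemma~\ref{lm:UpK3Free}), proved by a vertex-cloning argument, and your parenthetical hint about ``room below the Tur\'an bound'' is exactly the idea but would need to be fleshed out along those lines.
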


This theorem is obtained by building upon the flag algebra approach
from~\cite{razborov:08}. In order to prove it we have to characterize
first the set of extremal flag algebra homomorphisms for the
$g_3$-problem. This is done in
Theorem~\ref{th:Phi} of Section~\ref{flag}, where the precise statement can be found.
This task requires some extra work in
addition to the arguments in~\cite{razborov:08} and is an example of how
flag algebra calculations may lead to structural results about graphs.
(For some other results of a similar type, see
e.g.~\cite{pikhurko:11,CKPSTY,das+huang+ma+naves+sudakov:13,hatami+hladky+kral+norine+razborov:13,pikhurko+vaughan:13}.)

Theorem~\ref{th:comb} (or more precisely Theorem~\ref{th:Phi}) can be viewed as a small step towards the more
general problem of understanding graph limits with given edge and triangle densities. The latter problem naturally appears in the study of  exponential random graphs (see e.g.\ \cite{radin+yin:11:arxiv,aristoff+radin:13,chatterjee+diaconis:13,radin+sadun:13,radin+ren+sadun:13})
and large deviation inequalities for the triangle density
in Erd\H{o}s-R\'enyi random graphs (see e.g.\ \cite{chatterjee+dey:10,chatterjee+varadhan:11,chatterjee+dembo:14:arxiv,lubetzky+zhao:rsa,lubetzky+zhao:14:arxiv}).

Our initial motivation was  the following  conjecture of Lov\'asz and Simonovits~\cite[Conjecture~1]{lovasz+simonovits:76} for $r=3$.

\begin{conjecture}\label{cj:LS} For every
$r\ge 3$ there is $n_0$ such that for every $n\ge n_0$ and
$m$ with $0\le m\le {n\choose 2}$
at least one of $g_r(m,n)$-extremal graphs is obtained
from a complete partite graph by adding a triangle-free graph inside
one part.
\end{conjecture}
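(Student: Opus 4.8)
We address the case $r=3$; for $r\ge4$ one would need the analogue of Theorem~\ref{th:comb} for $K_r$, which is not available here, so that part of the conjecture stays open. The plan is to upgrade the asymptotic Theorem~\ref{th:comb} to an exact extremal statement. Fix $n$ large and $m$ with $0\le m\le{n\choose2}$, set $a=m/{n\choose2}$, and let $G$ be an arbitrary $g_3(m,n)$-extremal graph. The construction in the Introduction shows $g_3(m,n)\le(g_3(a)+o(1)){n\choose3}$, uniformly in $m$, so for $n$ large $G$ has at most $(g_3(a)+\delta){n\choose3}$ triangles, where $\delta=\delta(\e)$ and $n_0=n_0(\e)$ are as in Theorem~\ref{th:comb}. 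Hence, after relabelling vertices, $G$ differs from some $H\in\C H_{a,n}$ in at most $\e{n\choose2}$ adjacencies, and pulling back the partition of $H$ equips $G$ with parts $V_1\cup\dots\cup V_{t+1}$, where $t$ satisfies~\req{t} and $|V_1|=\dots=|V_t|=\floor{cn}$. Writing $W=V_1\cup\dots\cup V_{t-1}$ and $U=V_t\cup V_{t+1}$, the graph $G$ is within $\e n^2$ edits of the complete $t$-partite graph $K(V_1,\dots,V_{t-1},U)$ with a triangle-free graph added inside $U$.

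Call a vertex $v$ \emph{clean} if either $v\in V_i$ with $i\le t-1$ and $v$ is joined to all of $[n]\setminus V_i$ and to nothing inside $V_i$, or $v\in U$ and $v$ is joined to all of $W$ while $N(v)\cap U$ spans no edge. First delete the set $B$ of vertices incident to at least $\gamma n$ adjacencies on which $G$ and $H$ disagree, for a small constant $\gamma$; since there are at most $\e{n\choose2}$ disagreements, $|B|\le\e n/\gamma$ is small. On $G-B$ every vertex has few defects, and a local-optimality argument shows this graph already has the clean structure exactly: if $G-B$ deviated from it --- an edge inside some $V_i$, a missing edge between two distinct parts, or a triangle inside $U$ --- a degree-preserving rewiring would strictly decrease the number of triangles, contradicting extremality. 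Now re-insert the vertices of $B$ one at a time; for such a $v$, with the rest of the graph held fixed, the number $f(v)$ of triangles through $v$ is a separately convex function of the numbers of neighbours of $v$ in the various parts, and for a graph that is complete multipartite plus a triangle-free graph inside one part this function, subject to $\deg(v)$ being prescribed, is minimised exactly at clean patterns (with the host part of $v$ chosen to accommodate its degree). Rewiring $v$ accordingly does not increase the triangle count, so extremality is preserved, and iterating yields a $g_3(m,n)$-extremal graph $G'$ that is \emph{exactly} a complete multipartite graph with a triangle-free graph added inside one part, with full parts $V_1,\dots,V_{t-1}$ and distinguished part $U$.

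It remains to identify the discrete data of $G'$: the number of parts, their sizes, and the number $e_U$ of edges of the triangle-free graph inside $U$. The key fact --- the one underlying the construction in the Introduction --- is that every edge inside $U$ lies in exactly $|W|$ triangles, since a third vertex inside $U$ would create a triangle in a triangle-free graph. Hence the total number of triangles equals an explicit function $\Phi$ of the part sizes plus $|W|\,e_U$, and $e_U$ is itself determined by the part sizes through the constraint $e(G')=m$. Thus one must minimise the resulting explicit function of the part sizes over integer partitions, subject to the realisability bound $0\le e_U\le\floor{|U|^2/4}$ (Mantel's theorem). A convexity/exchange argument does this: transferring a vertex between two full parts to make their sizes more equal does not increase the objective, so the full parts must be balanced, and a direct comparison of the admissible numbers of parts and of how $U$ splits into $V_t\cup V_{t+1}$ shows the minimiser has the form asserted in the conjecture. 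This is a genuine relaxation of membership in $\C H_{a,n}$, as the conjecture permits slightly unequal part sizes, which is exactly the freedom needed to realise a prescribed $m$ exactly. The degenerate ranges --- $m\in\{0,{n\choose2}\}$, and small $m$ for which $G'$ is itself triangle-free (a single part) --- are checked directly.

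The main obstacle is this very passage from the qualitative Theorem~\ref{th:comb} to an exact statement. Two points are delicate. First, both the local-optimality step on $G-B$ and the reinsertion of $B$ need the minimum of the relevant triangle-count function to be attained \emph{only} at clean patterns, with no neutral alternative along which a vertex could drift into a non-multipartite configuration; ensuring the relevant convexity is \emph{strict} requires the ambient graph to be sufficiently close to the ideal structure, so one wants the error term in Theorem~\ref{th:comb} to be polynomially small rather than merely $o(1)$ --- or, failing that, a careful ordering of the cleaning that bootstraps from the weaker input. Second, the final discrete optimisation is subtle at the transition values $a=1-\frac1s$, where the two admissible values of $t$ yield essentially the same graph and the objective is flat to first order in the part sizes, so that separating the optimum from nearby configurations requires a second-order computation.
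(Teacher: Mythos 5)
The statement you are addressing is Conjecture~\ref{cj:LS}, not a theorem; the paper offers no proof of it. The authors are explicit about this: they prove the asymptotic stability result Theorem~\ref{th:comb}, express the hope that it ``may help in proving Conjecture~\ref{cj:LS} in the same way as the so-called stability approach is useful in obtaining exact results,'' and then state plainly, ``Unfortunately, we have not succeeded in proving the case $r=3$ of Conjecture~\ref{cj:LS} so far.'' So there is no paper proof for your attempt to be compared against, and what you have written does not close the gap either.

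The concrete difficulties are ones you yourself name but do not resolve. The central step --- ``a local-optimality argument shows this graph already has the clean structure exactly: \dots\ a degree-preserving rewiring would strictly decrease the number of triangles'' --- is asserted, not proved, and this is precisely where stability arguments for minimization problems typically break down: unlike the Tur\'an-type maximization setting, the triangle-minimization landscape has flat directions (the triangle-free graph inside $U$ is not uniquely determined, and near $a=1-\frac1s$ the two admissible values of $t$ give configurations that agree to first order), so a local rewiring need not \emph{strictly} decrease the count, and the extremality of $G$ gives you no contradiction from a non-strict inequality. You acknowledge that you would need the edit-distance error in Theorem~\ref{th:comb} to be polynomially small rather than $o(1)$, ``or, failing that, a careful ordering of the cleaning that bootstraps from the weaker input'' --- but you supply neither. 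The same issue infects the reinsertion of $B$: you claim the triangle count through $v$ is minimized ``exactly at clean patterns,'' but after the first cleaning pass the host graph is only approximately multipartite, and the argument would need a quantitative lower bound on the convexity to prevent $v$ from drifting. Finally, the closing discrete optimization (``A convexity/exchange argument does this'') is exactly the integer-programming subtlety that Lov\'asz and Simonovits's sharper version of Theorem~\ref{th:stab} was designed to handle, and it is only known to go through near $m=\ex(n,K_s)$. In short: this is a reasonable sketch of the stability \emph{strategy} the authors propose, but it contains the open problem rather than a solution to it, and should not be presented as a proof.
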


If this conjecture is proved, then one may consider the problem of
determining $g_r(m,n)$ combinatorially solved: the number of
$K_r$-subgraphs in such a graph $G$ is some explicit polynomial in $m$, $n$,
and part sizes, and the question reduces to its minimization over
the integers. This task may
be difficult but it involves no graph theory. In fact, it is not hard to
show (see e.g.~\cite[Section~3]{nikiforov:11}) that the
optimal part ratios are approximately as those of the graphs in $\C H_a$,
where $a=m/{n\choose 2}$.
(However, our rounding $|V_1|=\floor{cn}$, etc., was rather arbitrary:
it was chosen just to have the family $\C H_a$ well-defined.)

We hope that Theorem~\ref{th:comb} may help in proving
Conjecture~\ref{cj:LS} in the same way as the so-called stability
approach is useful in obtaining exact results. One example where
this approach succeeded is the clique minimization problem in the
special case when
$a=1-\frac1t$ for some integer $t\ge 2$. First, the results of
Nordhaus and Stewart~\cite{nordhaus+stewart:63} (for $r=3$)
and Moon and Moser~\cite{moon+moser:62}
(for $r\ge 4$) imply that for any $m,n$ we have
 \beq{goodman}
 g_r(m,n)\ge \frac{t(t-1)\dots(t-r+1)}{r!}\, \left(\frac nt\right)^r,\quad
\mbox{if $t\ge r-1$},
 \eeq
 where the real $t$ is defined by $m=(1-1/t)n^2/2$. A short proof
can be found in \cite[Problem~10.40]{lovasz:cpe}.
Note that, if $t$ is an integer,
then~\req{goodman} is asymptotically
best possible as shown by
the Tur\'an
graph $T_t(n)$; thus
$g_r(1-\frac1t)=r! {t\choose r}/t^r$ in this case.
Lov\'asz and
Simonovits~\cite[Theorem~2]{lovasz+simonovits:83} deduced that all almost extremal graphs are
close to $T_t(n)$ in the edit distance:

\begin{theorem}\label{th:stab}
For every $r$ and $\e>0$, there are $\delta>0$ and $n_0$ such that, for
any integer $t\ge r-1$, every graph $G$ with $n\ge n_0$ vertices,
$(1-\frac1t\pm\delta) {n\choose 2}$ edges, and at most
$(g_r(1-\frac1t)+\delta) {n\choose r}$ copies of $K_r$ can be made
isomorphic to $T_t(n)$ by changing at most $\e {n\choose 2}$
edges.\end{theorem}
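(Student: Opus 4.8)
\medskip\noindent{\bf Proof plan.~}
Fix $r$ and $\e$, and suppose $G$ has $n$ vertices, $e(G)=(1-\tfrac1t\pm\delta)\binom n2$ edges and $k_r(G)\le(g_r(1-\tfrac1t)+\delta)\binom nr$ copies of $K_r$, where $k_r(\cdot)$ counts $K_r$'s, $t\ge r-1$ is an integer, and $\delta=\delta(r,\e)$ is to be chosen small; write $B_v\df V(G)\setminus N(v)$ and $\bar d\df 2e(G)/n$. I would first split off the range $t\ge t_0(r,\e)$, where $T_t(n)$ is itself close to $K_n$ and it is cleaner to argue on the complement (treated below), and concentrate on moderate $t$. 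There the plan is to turn~\req{goodman} into a quantitative (``defect'') statement by reproving it with the slack kept explicit. For $r=3$ this is transparent from Goodman's identity
\[
 3\,k_3(G)=\sum_{v\in V(G)}\bigl(d(v)-\bar d\bigr)^2+\frac{e(G)\bigl(4e(G)-n^2\bigr)}{n}+\sum_{uw\in E(G)}\bigl|\,V(G)\setminus(N(u)\cup N(w))\,\bigr|,
\]
whose middle term is the Moon--Moser value $3\,g_3(1-\tfrac1t)\binom n3$ up to lower-order terms; for $r\ge4$ one iterates the identity $k_r(G)=k_r(G-v)+k_{r-1}(G[N(v)])$, averages over $v$, and feeds in~\req{goodman} for $r-1$, arguing by induction on $r$. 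In all cases near-minimality of $k_r(G)$ forces, with errors $o(1)\to0$ as $\delta\to0$: \emph{(a) near-regularity} --- all but $o(n)$ vertices $v$ have $d(v)=(1\pm o(1))(1-\tfrac1t)n$, so $|B_v|=(1\pm o(1))\tfrac nt$; and \emph{(b) a covering condition} --- all but $o(n^2)$ edges $uw$ satisfy $|B_u\cap B_w|=o(n)$. (For $r\ge4$ one in fact obtains the stronger statement that all but $o(n)$ of the links $G[N(v)]$ are near-extremal for the $(r-1,t-1)$-problem, hence $o(1)$-close to $T_{t-1}(d(v))$ by induction, and this implies (a) and (b).)

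\smallskip
Next I would pass from (a), (b) to a global partition. A short argument gives a \emph{dichotomy}: for all but $o(n^2)$ pairs $\{u,w\}$ of near-regular vertices, either $uw\in E(G)$ and $B_u,B_w$ are almost disjoint, or $uw\notin E(G)$ and $B_u,B_w$ almost coincide --- indeed, if $uw\notin E(G)$ but a large $W\subseteq B_u\setminus B_w$ existed, then for a typical $w'\in W$ the edge $uw'$ would satisfy (b), forcing $u\notin B_{w'}$ and contradicting $w'\in B_u$. Consequently ``$|B_u\triangle B_w|$ is small'' behaves like an equivalence relation; its classes $C_1,\dots,C_k$ satisfy $C_i\approx B_v$ for every $v\in C_i$, hence $|C_i|=(1\pm o(1))\tfrac nt$, which forces $k=t$, and almost all within-class pairs are non-edges while almost all cross pairs are edges. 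Thus $G$ is $o(n^2)$-close to a complete $t$-partite graph with nearly equal parts; rebalancing the parts to the sizes of $T_t(n)$ moves only $o(n)$ vertices and costs a further $o(n^2)$ edits. Choosing $\delta$ small and $n\ge n_0(r,\e)$ keeps the total below $\e\binom n2$.

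\smallskip
For large $t$ I would instead work with $\bar G$, which has only $(\tfrac1t\pm\delta)\binom n2$ edges, using that $k_r(G)$ equals the number of independent $r$-subsets of $\bar G$ and hence must be within $\delta\binom nr$ of its minimum $r!\binom tr/t^r\binom nr$. The plan is to deduce that $\bar G$ is $o(1)$-close to a disjoint union of $t$ cliques of size $(1\pm o(1))\tfrac nt$, so that $G$ is $o(n^2)$-close to $T_t(n)$: the bound $\sum_{|S|=r}\mathbf 1[e(\bar G[S])\ge1]\le e(\bar G)\binom{n-2}{r-2}=\binom r2\,e(\bar G)\binom nr/\binom n2$ is nearly tight in this range, which pins the degree sequence and the number of cherries (paths on three vertices) of $\bar G$ to their ``clique-union'' values, and a Kruskal--Katona-type stability statement then forces $\bar G$ into the claimed shape.

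\smallskip
The main obstacle is the defect version of~\req{goodman} for moderate $t$: one has to reprove the Moon--Moser/Goodman bound while controlling the loss, so that near-minimality of $k_r(G)$ constrains both the degree sequence and the structure of the links at once --- and the inductive bookkeeping for $r\ge4$, in particular verifying that the links have the correct edge density before~\req{goodman} may be applied to them, is where the real work lies. A second, subtler difficulty is uniformity: the $o(1)$-errors and the threshold $t_0(r,\e)$ must be chosen independently of $n$ and $t$, so that the moderate and large ranges overlap and the two structural conclusions match over the whole interval $r-1\le t\le n$.
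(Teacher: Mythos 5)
The paper does not prove this theorem; it is quoted from Lov\'asz--Simonovits \cite{lovasz+simonovits:83}, so there is no in-paper argument to match your proposal against. Judged on its own, your plan for $r=3$ at moderate $t$ is in the right spirit (and your Goodman-type identity is correct: the middle term is indeed $3\,g_3(1-\tfrac1t)\binom n3+o(n^3)$ when $e(G)=(1-\tfrac1t\pm\delta)\binom n2$, so near-minimality forces $\sum_v(d(v)-\bar d)^2=O(\delta n^3)$ and $\sum_{uw\in E(G)}|B_u\cap B_w|=O(\delta n^3)$), but the one-line justification of the dichotomy is broken. You take $w'\in W\subseteq B_u\setminus B_w$ and speak of ``the edge $uw'$''; however $w'\in B_u$ means $uw'\notin E(G)$, so (b) cannot be applied to it. If one instead applies (b) to the genuine edge $ww'$, it says only that $|B_w\cap B_{w'}|$ is small, and since $u$ is manifestly one element of that intersection (indeed $u\in B_w\cap B_{w'}$ for \emph{every} $w'\in W$), no contradiction follows --- a single vertex sitting inside a set of size $o(n)$ is perfectly consistent with (b). The correct route is global rather than pointwise: the double count $\sum_{uw\in E(G)}|B_u\cap B_w|=\sum_{z}e\bigl(G[B_z]\bigr)$ shows that for all but $o(n)$ vertices $z$ the non-neighborhood $B_z$ spans only $o(n^2)$ edges, i.e.\ is nearly independent; then for most $w\in B_u$ the degree of $w$ inside $B_u$ is $o(n)$, which is exactly $|B_u\setminus B_w|=o(n)$, and the near-equivalence classes follow. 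Your conclusion is true, but your stated reason for it is not a proof.

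Beyond that, the remaining two regimes are plans rather than arguments, and you say so yourself. For $r\ge4$ the whole burden is to first pin down the edge densities of the links $G[N(v)]$ before \req{goodman} can be applied inductively to them; this is precisely the bootstrapping you flag as ``where the real work lies,'' and nothing in the sketch resolves it. For large $t$, the reduction to the complement is reasonable, but the invoked ``Kruskal--Katona-type stability statement'' is a substantial external ingredient that is not identified or proved, and you would also need to verify that your large-$t$/moderate-$t$ thresholds overlap with errors uniform in $t$ and $n$ --- again something you note but do not address. So the proposal identifies a plausible roadmap and one correct key identity, but it contains a concretely wrong step in the only part it tries to argue in detail, and it leaves open the bulk of the quantitative work.
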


In fact, a sharper form of this result (with an explicit
$\delta=\delta(r,t,\e,n)$) was proved by Lov\'asz and
Simonovits~\cite{erdos+simonovits:83} who used it to establish
Conjecture~\ref{cj:LS} when $\ex(n,K_s)\le m\le \ex(n,K_s)+\e n^2$ for
some $\e=\e(r,s)>0$.

This paper is organized as follows. We outline
the main ideas behind flag algebras and state some of the key
inequalities from \rcpc\ in Section~\ref{flag}. There, we also state our result
on the
structure of $g_3$-extremal homomorphisms (Theorem~\ref{th:Phi}) and
show how this implies Theorem~\ref{th:comb}.  Section~\ref{sketch}
contains a sketch of the proof from~\rcpc\ that $g_3(a)=\hh(a)$.
Theorem~\ref{th:Phi} is proved in
Section~\ref{proof}.

\section{Flag Algebras}\label{flag}

In order to understand this paper the reader should be familiar with
the concepts introduced in~\cite{razborov:07}.
 We do not see any reasonable way
of making this paper self-contained, without making it quite long
and repeating large passages from~\cite{razborov:07}.
Therefore, we restrict ourselves to sketching
the proofs in~\cite{razborov:07,razborov:08}, during which we informally
illustrate the main ideas by providing some analogs from the discrete
world. This serves two purposes: to state the key inequalities
from~\cite{razborov:07,razborov:08} that we need here and to provide some guiding
intuition for the reader who is about to start
reading~\cite{razborov:07}. We stress that some flag
algebra concepts do not have direct combinatorial analogs or require a
plethora of constants to state them in terms of graphs. Here we just try to
distill and present some motivational ideas. Besides, even if the theory was
intentionally developed to cover arbitrary combinatorial structures, in our
brief exposition we confine ourselves to the case of ordinary graphs, as the
most intuitive one.

Many proofs in extremal graph theory proceed by considering possible
densities of small subgraphs and deriving various inequalities between
them. These calculations often become very cumbersome and difficult to
keep track of ``by hand'', especially that the number of
non-isomorphic graphs increases very quickly with the number of
vertices. One of the motivations behind introducing flag algebras was
to develop a framework where the mechanical book-keeping part of the
work is relegated to a computer.

So suppose that we have a graph $G$. Let $n=|V(G)|$ be its order.

The \emph{density} of a graph $F$ in $G$, denoted by $p(F,G)$, is the
probability that a random $|V(F)|$-subset of $V(G)$ spans a subgraph
isomorphic to $F$. The quantities that we are interested in are finite
linear combinations $\sum_{i=1}^s \alpha_i p(F_i,G)$, where $F_i$ is a graph and
$\alpha_i$ is a real constant.  One can view a formal finite sum $\sum_{i=1}^s
\alpha_i F_i$ as a function that evaluates to $\sum_{i=1}^s \alpha_i p(F_i,G)$
on input
$G$. Since we would like to operate with these objects on computers,
we try to keep redundancies to minimum. In particular, the graphs
$F_i$ are unlabeled and pairwise non-isomorphic. Let $\C F^0$ consist
of all (unlabeled non-isomorphic) graphs and let $\I R\C F^0$ be the vector
space that has $\C F^0$ as a basis. (The meaning of the
superscript 0 will be explained a bit later.)

There are some relations which are identically true
when it comes to evaluations on input
$G$: for example if $n\ge \ell\ge |V(\tilde
F)|$ for some graph $\tilde F$ and we know the
densities of
all subgraphs on $\ell$ vertices,
then the density of $\tilde F$ can be
easily determined:
 \beq{PTildeF}
 p(\tilde F,G)=\sum_{F\in \C F^0_\ell} p(\tilde F,F)p(F,G),
 \eeq
 where $\C F^0_\ell\subseteq \C F^0$ consists of all graphs with
exactly $\ell$ vertices.

So it makes sense to factor over $\C K^0$, the subspace of
$\I R\C F^0$ generated by $\tilde F-\sum_{F\in \C F^0_\ell} p(\tilde F,F)F$,
over all choices of $\tilde F$ and $\ell\ge |V(\tilde F)|$. Let
 $$
 \C A^0\df\I R\C F^0/\C K^0.
 $$
 By~\req{PTildeF},
any element of $\C A^0$ can still be identified with an evaluation on
(sufficiently large) graphs.

Let some $F_i\in \C F^0_{\ell_i}$ for $i=1,2$ be fixed. The product
$p(F_1,G)p(F_2,G)$ is the probability that two random subsets
$U_1,U_2\subseteq V(G)$ of sizes $\ell_1$ and $\ell_2$, drawn
independently, induce copies of $F_1$ and $F_2$ respectively. With probability
$1-O(1/n)$ (recall that $n=|V(G)|$), the sets $U_1$ and $U_2$ are
disjoint. Let us condition on this
event. The conditional distribution can be generated
as follows: first pick a random
$(\ell_1+\ell_2)$-set $U$ and then take a random partition $U=U_1\cup
U_2$ with $|U_i|=\ell_i$.
Thus
 \beq{pF1F2}
 p(F_1,G)p(F_2,G)=\sum_{F\in\C F^0_{\ell_1+\ell_2}} p(F_1,F_2;F)p(F,G)+O(1/n),
 \eeq
  where $p(F_1,F_2;F)$ denotes the probability that $F[U_i]\cong F_i$ (i.e.\ the subgraph
of $F$ induced by $U_i$ is isomorphic to $F_i$) for both
$i=1,2$ when we take a random partition
$U_1\cup U_2$ of the vertex set of
$F\in \C F^0_{\ell_1+\ell_2}$ with part sizes $\ell_1$ and $\ell_2$.
 Since we are interested in the case when $n\to\infty$, we
formally define the product $F_1\cdot F_2$ to be equal to
$\sum_{F\in\C F^0_{\ell_1+\ell_2}} p(F_1,F_2;F)\, F\in \I R\C F^0$
and extend this multiplication
to $\I R\C F^0$ by linearity. It is not surprising that this definition is
compatible with
the factorization by $\C K^0$, making $\C A^0$ into a commutative associate algebra with the empty graph being the multiplicative identity,
see~\cite[Lemma~2.4]{razborov:07}.

Unfortunately, we do not have the property that graph evaluations
preserve multiplication exactly.
This can be rectified if we take as input not just a single graph $G$
but a sequence of graphs $\{G_n\}$ which is \emph{convergent} by which
we mean that $|V(G_1)|<|V(G_2)|<\dots$ (we call such sequences
\emph{increasing}) and for every graph $F$ the
limit
 \beq{phi(F)}
 \phi(F)\df \lim_{n\to\infty} p(F,G_n)
 \eeq
 exists. We extend $\phi$ by linearity to $\I R\C F^0$. It
is routine to check that $\phi$ is compatible with the factorization by $\C
K^0$ and, in fact, gives an algebra homomorphism from $\C A^0$ to $\I
R$ (which we still denote by $\phi$), see \rjsl[Theorem~3.3].
We say that $\phi$ is the \emph{limit}
of $\{G_n\}$ and, following the notation in
\rjsl[Section~3.1], denote this as $\phi=\lim_{n\to\infty} p^{G_n}$, where
$p^{G_n}(F)\df p(F, G_n)$ if $|V(F)|\leq |V(G_n)|$ and 0 otherwise.

Clearly, $\phi$ is
\emph{non-negative}, that is, $\phi(F)\ge 0$ for every graph $F$. Let
$\Hom^+(\scr A^0, {\mathbb R})$ be the set of all non-negative homomorphisms.

It turns out that every non-negative homomorphism $\phi:\C A^0\to\I R$ is
the limit of some sequence of graphs. It is instructive to sketch a
proof of this, see Lov\'asz and Szegedy~\cite[Lemma~2.4]{lovasz+szegedy:06}
(or~\cite[Theorem~3.3]{razborov:07} in more general context) for details.
Take some integer
$n$. Since the identity $\sum_{F\in\C F^0_n} F=1$ holds in $\C A^0$,
we have that $\sum_{F\in\C F^0_n} \phi(F)=1$, that is, $\phi$ defines
some probability distribution on $\C F^0_n$. Let $\rn{G_{n,\phi}}\in \C
F^0_n$ be drawn according to this distribution with the choices for
different values of $n$ being independent. Fix some $F$ and $\e>0$.
Let $n\ge |V(F)|$.  An easy calculation shows that the expectation of
$p(F,\rn{G_{n,\phi}})$ is exactly $\phi(F)$. Also, the variance of
$p(F,\rn{G_{n,\phi}})$, which can be expressed via counting pairs of
$F$-subgraphs versus two independent copies of $F$, is
$O(1/n)$. Chebyshev's inequality implies that the probability of the
``bad'' event $|p(F,\rn{G_{n,\phi}})-\phi(F)|>\e$ is $O(1/n)$ and the
Borel-Cantelli Lemma shows that with probability $1$ only finitely
many bad events occur when $n$ runs over, for example, all squares.
Since there are only countably many choices of $F$ and, for example,
$\e\in \{1,\frac12,\frac13,\dots\}$,
we conclude that $\{\rn{G_{n^2,\phi}}\}$ converges to $\phi$
with probability 1.  Thus the required convergent sequence exists.

If one wishes that the graph orders in the sequence span all natural
numbers, one can pick some convergent sequence and fill all
orders by uniformly ``blowing'' up its members, see
e.g.~\cite[Section~2.3]{hatami+hladky+kral+norine+razborov:13}.
Alternatively, one can show that  the sequence $\{\rn{G_{n,\phi}}\}$ itself converges
with probability 1 via a stronger concentration result for
$p(F,\rn{G_{n,\phi}})$ that considers its first four moments, see~\cite[Lemma~11.7]{lovasz:lngl}.

How can these concepts be useful for proving that $g_3(a)=\hh(a)$? Pick an
increasing
sequence of graphs $\{G_{n}\}$ of edge density
$a+o(1)$ such that the limit of $p(K_3,G_n)$ exists and is equal to
$g_3(a)$. A standard diagonalization argument
shows that $\{G_n\}$ has a convergent subsequence; let $\phi$ be its limit.
Then
$\phi(K_2)=a$. Now, if we can show that
 \beq{LowerBound}
 \forall\, \phi\in \Hom^+(\scr A^0, {\mathbb R})\quad \left(\phi(K_2)=a\quad
\Longrightarrow\quad \phi(K_3)\ge \hh(a)\right),
 \eeq
 then we can conclude that indeed $g_3(a)=\hh(a)$, as it was done in \rcpc.

In this paper, we achieve more: we describe the set of all
\emph{extremal} homomorphisms, that is, those $\phi\in \Hom^+(\scr A^0, {\mathbb R})$
that achieve equality $\phi(K_3)=g_3(\phi(K_2))$.

Let $\Phi\subseteq
\Hom^+(\scr A^0, {\mathbb R})$ consist
of all possible limits of convergent sequences $\{G_n\}$ for which
there is $a\in[0,1]$  such that $G_n\in \C H_a$ for all $n$.
Equivalently, $\Phi$ can be defined as follows.
Recall that the \emph{join} $G_1\vee \ldots \vee G_k$ of graphs
$G_1,\ldots, G_k$ is
obtained by taking their disjoint union and adding all edges in
between. We define a similar operation on homomorphisms
$\phi_1,\dots,\phi_k\in \Hom^+(\scr A^0,
{\mathbb R})$. We need a more general construction where one specifies
how much relative weight each $\phi_i$ has, by giving
non-negative reals $\alpha_1,\dots,\alpha_k$ with sum $1$. Let
$n\to\infty$ and, for $i\in [k]$,
let $G_{i,n}$ be a graph with $\floor{\alpha_i n}$ vertices such that
the sequence $\{G_{i,n}\}$ converges to $\phi_i$; as we
have already remarked, it exists.
Let $F_n=G_{1,n}\vee\dots\vee G_{k,n}$. Let  the \emph{join}
$\phi=\vee(\phi_1,\dots,\phi_k;\alpha_1,\dots,\alpha_k)$
be the limit of $\{F_n\}$ (it is easy to see that the limit exists).

Alternatively, we can define the join $\phi$ without appealing to
convergence. To this end, it is enough to define the density of each graph
$F\in\C F^0$, and we do it as follows. Let $\mathrm{aut}(F)$ denote
the number of automorphisms of $F$. Let
 \begin{equation}\label{DefJoin}
  \phi(F)\df \frac{|V(F)|!}{\mathrm{aut}(F)}\sum_{(V_1,\dots,V_k)} \prod_{i=1}^k\left(
\alpha_i^{|V_i|}\,\phi_i(F[V_i])\,\frac{\mathrm{aut}(F_i)}{|V_i|!}\right),
 \end{equation}
 where the summation runs over all possible ways (up to isomorphism) to
partition
$V(F)=V_1\cup\dots\cup V_k$ into $k$ labeled parts (allowing empty parts)
so that the induced bipartite subgraph $F[V_i,V_j]$ is complete for all $1\le
i<j\le k$. The reader is welcome to formally check
that the join
is well-defined (with respect to the factorization by $\C K^0$)
and belongs to $\Hom^+(\scr A^0, {\mathbb R})$. (These facts are obvious
from the first definition.) Now, $\Phi$ is exactly the set
of all possible joins
 $$
 \vee(\underbrace{0,\dots,0}_{t-1 \mbox{ \scriptsize times}},\psi;
\underbrace{c,\dots,c}_{t-1 \mbox{ \scriptsize times}},1-(t-1)c),
 $$
 where
 $0$ denotes the (unique) non-negative homomorphism in
$\Hom^+(\scr A^0, {\mathbb R})$ of
zero edge-density, $\psi\in \Hom^+(\scr A^0, {\mathbb R})$ is arbitrary with
$\psi(K_3)=0$ and $\psi(K_2)=2c(1-tc)/(1-(t-1)c)^2$, and
$c$ is a real from the interval $[1/(t+1),1/t)$.

Our main result states that the set of $g_3$-extremal homomorphisms is
exactly $\Phi$.

\begin{theorem}\label{th:Phi}
 $$
 \Phi=\left\{\phi\in\Hom^+(\scr A^0, {\mathbb R})\mid \phi(K_3)=
g_3(\phi(K_2))\right\}.
 $$
\end{theorem}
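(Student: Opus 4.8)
\medskip\noindent\textbf{Proof plan.}~The inclusion $\Phi\subseteq\{\phi:\phi(K_3)=g_3(\phi(K_2))\}$ is routine. Given $\phi\in\Phi$, write $\phi=\lim_{n\to\infty}p^{G_n}$ with $G_n\in\C H_a$ for a fixed $a$; every graph in $\C H_{a,m}$ has $(a+o(1)){m\choose 2}$ edges and, as in the construction preceding~\req{gr}, $(\hh(a)+o(1)){m\choose 3}$ triangles, so $\phi(K_2)=a$ and $\phi(K_3)=\hh(a)$, and $g_3=\hh$~\rcpc\ (recalled and sketched in Section~\ref{sketch}) gives $\phi(K_3)=g_3(\phi(K_2))$. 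Alternatively one checks this straight from~\req{DefJoin}: for $\phi=\vee(0,\dots,0,\psi;c,\dots,c,1-(t-1)c)$ the edge-free factors feed into $\phi(K_2)$ and $\phi(K_3)$ only through the ``complete multipartite'' cross-edges, and then $\psi(K_3)=0$ and $\psi(K_2)=2c(1-tc)/(1-(t-1)c)^2$, combined with~\req{c} and~\req{gr}, give $\phi(K_2)=a$ and $\phi(K_3)=\hh(a)$.

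For the reverse inclusion I would induct on the integer $t=t(\phi)$ with $\phi(K_2)\in[1-\frac1t,1-\frac1{t+1}]$ fixed by~\req{t} (the overlap at $a=1-\frac1t$ is immaterial, both choices naming the same member of $\Phi$). If $t=1$ then $a:=\phi(K_2)\le\frac12$ and $\hh(a)=0$ by~\req{gr}, so extremality forces $\phi(K_3)=0$; since $c\mapsto 2c(1-c)$ maps $[\frac12,1]$ onto $[0,\frac12]$, choosing $c$ with $2c(1-c)=a$ lets $\phi$ itself serve as the triangle-free factor, so $\phi=\vee(\phi;1)\in\Phi$. Now let $t\ge 2$, assume the claim for $t-1$, and let $\phi$ be extremal. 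The crux is the structural assertion that $\phi$ possesses a \emph{universal independent class}: $\phi=\vee(0,\phi';c,1-c)$ for the value $c$ of~\req{CExplicit} and some $\phi'\in\Hom^+(\scr A^0,{\Bbb R})$, with $0$ the edge-free homomorphism. Granting this, a direct computation gives
\begin{gather*}
\phi(K_2)=2c(1-c)+(1-c)^2\phi'(K_2),\\
\phi(K_3)=3c(1-c)^2\phi'(K_2)+(1-c)^3\phi'(K_3),
\end{gather*}
and substituting the explicit forms~\req{CExplicit}, \req{c}, \req{gr} forces $\phi'(K_2)\in[1-\frac1{t-1},1-\frac1t]$ and $\phi'(K_3)=\hh(\phi'(K_2))=g_3(\phi'(K_2))$; thus $\phi'$ is again extremal, now with parameter $t-1$. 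By the induction hypothesis $\phi'=\vee(0,\dots,0,\psi;c',\dots,c',1-(t-2)c')$ with $t-2$ edge-free factors, and $c'=c/(1-c)$ by uniqueness of the root of~\req{c}. Associativity of the join then rewrites $\phi$ as a join of $t-1$ copies of $0$, each of weight $c\in[\frac1{t+1},\frac1t]$, with $\psi$ of weight $1-(t-1)c$; as $\psi(K_3)=0$ and $\psi(K_2)=2c(1-tc)/(1-(t-1)c)^2$ as required, this is a member of $\Phi$ with parameter $t$, closing the induction.

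The main obstacle is the structural assertion itself: that an extremal $\phi$ decomposes as $\vee(0,\phi';c,1-c)$. To prove it one returns to the flag algebra derivation of the lower bound~\req{LowerBound} in~\rcpc\ (sketched in Section~\ref{sketch}): since $\phi$ attains equality there, every Cauchy--Schwarz / positive-semidefiniteness step used in that derivation must be tight when evaluated at $\phi$. From this one has to extract, first, that $\phi$ vanishes on a certain finite list of small graphs (for instance $\phi(K_{t+2})=0$, reflecting that every graph in $\C H_a$ has clique number at most $t+1$), and second, that tightness of the pertinent averaged squares $\phi(\eval{f^2}{\sigma})=0$ constrains the unrooted behaviour of $\phi$ so rigidly that, in terms of the graphon $W$ representing $\phi$, there is a measurable set $A$ of measure exactly $c$ with $W\equiv 0$ on $A\times A$ and $W\equiv 1$ on $A\times([0,1]\setminus A)$; deleting $A$ and renormalising yields $\phi'$. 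Converting these \emph{analytic} equality conditions into the \emph{exact} clone structure (not merely an approximate one) is the additional work beyond the arguments of~\rcpc\ that is alluded to in the introduction, and it will require a graph-removal / graph-limit step to promote the extracted density identities to the literal adjacency structure. This is carried out in Section~\ref{proof}.
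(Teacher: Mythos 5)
Your forward inclusion and your $t=1$ base case are fine, and your inductive skeleton (establish a join decomposition, recognise the factors as extremal with smaller parameter, apply the induction hypothesis and reassemble) is the same overall shape as the paper's argument. But the entire content of the theorem lives in the single sentence you defer with ``\emph{This is carried out in Section~\ref{proof}}''. You have correctly identified that the crux is a structural decomposition (you posit $\phi=\vee(0,\phi';c,1-c)$), but you do not prove it; you only gesture at ``tightness of the Cauchy--Schwarz / positive-semidefiniteness steps'' and a ``graph-removal / graph-limit step''. That gesture does not yield the decomposition, for two concrete reasons. First, the inequalities from \rcpc\ that you are quoting were only strong enough to determine $\phi_0(K_3)$; extracting a measurable near-clique-sum structure from their equality cases requires \emph{new} flag-algebra identities not present in \rcpc. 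The paper has to derive the recursion of Claim~\ref{KrRec} and its corollary Claim~\ref{Kr} (in particular $\phi_0(K_{t+2})=0$), produce a fresh inequality \req{e_relation}--\req{id6} to prove $\phi_0(G_1)=\phi_0(G_2)=0$ (Claim~\ref{G1G2}), and combine these with $\phi_0(\bar K_{1,3})=0$ and the local combinatorial Lemma~\ref{lm:5comb} before any removal-type argument becomes available. Your sketch contains none of this, and without it the ``rigid graphon structure'' you invoke is simply an assertion.

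Second, even the \emph{form} of the decomposition you posit is not what drops out of the analysis, and asserting it outright would be unjustified. The paper does not first identify an independent class of relative size exactly $c$ that is universally joined to the rest. Rather, after splitting into the cases $\phi_0(\bar P_3)=0$ (handled by the Induced Removal Lemma and Lemma~\ref{lm:ratios}) and $\phi_0(\bar P_3)>0$, it selects three vertices spanning $\bar P_3$ and uses Claim~\ref{G1G2} to show their common neighbourhood $V_1$ is almost completely joined to its complement $V_2$, giving a decomposition $\phi_0=\vee(\phi_1,\phi_2;\alpha_1,\alpha_2)$ with \emph{both} $\phi_1,\phi_2$ extremal homomorphisms (not a priori one of them equal to $0$) and with $\alpha_1,\alpha_2$ only known to be bounded away from $0$. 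Only after applying the induction hypothesis to both factors and then invoking Lemma~\ref{lm:ratios} twice (once to the complete-partite proxy $\phi_0'=\vee(\phi_1',\phi_2';\alpha_1,\alpha_2)$, once to $\phi_0$ itself) does one recover the precise weights and the presence of a part of size $c$. So your proposal is not merely missing detail; it posits an intermediate claim that is stronger than what the equality analysis directly produces, and then cites the paper's own Section~\ref{proof} for its justification, which is circular in this setting. To close the gap you would need to supply the additional flag inequalities, the exceptional-graph analysis, the case split on $\phi_0(\bar P_3)$, and the two-stage ratio argument via Lemma~\ref{lm:ratios}.
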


Let us show that Theorem~\ref{th:Phi} implies
Theorem~\ref{th:comb}. The shortest way is to refer to some known
results about the so-called \emph{cut-distance} $\delta_\Box$ that goes back to
Frieze and
Kannan~\cite{frieze+kannan:99}. We omit the definition of $\delta_\Box$ but refer
the reader to~\cite[Definition~2.2]{BCLSV:08} (see also \cite[Chapter~8]{lovasz:lngl}).

Suppose for the sake of contradiction that Theorem~\ref{th:comb} is false,
which is witnessed by some $\e>0$. Then  we can find an increasing
sequence $\{G_n\}$ of graphs
with $p(K_3,G_n)\le g_3(p(K_2,G_n))+o(1)$ that violates the conclusion
of  Theorem~\ref{th:comb}. By passing to a subsequence, we can assume that
$\{G_n\}$ is convergent.
 Let $\phi_0\in \Hom^+(\scr A^0, {\mathbb R})$ be its limit. Let $a=\phi_0(K_2)$.
Clearly, $\phi_0(K_3)=g_3(a)$. By Theorem~\ref{th:Phi},
$\phi_0\in\Phi$ and we can choose a sequence $\{H_n\}$ in $\C H$
which converges to $\phi_0$ with $V(H_n)=V(G_n)$.

This convergence means that asymptotically $G_n$ and $H_n$ have the same statistics
of fixed subgraphs. This does not necessarily implies that $G_n$ and $H_n$ are
close in the edit distance. (For example, two typical random graphs of
edge density $1/2$ have similar subgraph statistics but are far
in the edit distance.) However, the presence of a spanning complete partite
graph in $H_n$ implies a similar conclusion about $G_n$ as follows.

Theorem~2.7 in Borgs et al~\cite{BCLSV:08} gives that
$\delta_\Box(G_n,H_n)=o(1)$, that is, the cut-distance between $G_n$
and $H_n$ tends to 0. (An important property of the cut-distance
is that an increasing sequence $\{G_n\}$ is convergent if and only if it is
Cauchy
with respect to $\delta_\Box$.)

By~\cite[Theorem~2.3]{BCLSV:08}, we can relabel
$V(H_n)$ so that for every disjoint $S,T\subseteq V(G_n)$ we have
 \beq{ST}
 \left|e(G_n[S,T])-e(H_n[S,T])\right|=o(v^2),
 \eeq
 where $v=v(n)$ is the number of vertices in $G_n$.
Informally, this means that the graphs $G_n$ and $H_n$ have almost the same edge
distribution with respect to cuts. Take the partition
$V(H_n)=V_1\cup\dots \cup V_{t-1}\cup U$ that was used to define
$H_n$. Let $i\in[t-1]$. If we set $S=V_i$ and $T=V(G_n)\setminus V_i$
in~\req{ST}, then we conclude that the number of $S-T$ edges that are missing from
$G_n$ is $o(v^2)$. Also, the number of edges in
$G[V_i]$ is $o(v^2)$ for otherwise a random partition $V_i=S\cup T$
would contradict~\req{ST}. Thus, by changing $o(v^2)$ adjacencies in $G_n$, we
can assume that the graphs
$G_n$ and $H_n$ coincide except for the subgraph induced by $U$.
Suppose that $|U|=\Omega(n)$ for otherwise
we are done. We have
 $$
 |e(G_n[U])-e(H_n[U])|= |e(G_n)-e(H_n)|=o(v^2).
 $$
 Of course, when we modify $o(v^2)$ adjacencies in $G_n$, then the number of
triangles
changes by $o(v^3)$. Each
edge of $G_n[U]$ (and of $H_n[U]$)
is in the same number of triangles with the third
vertex belonging to $V(G_n)\setminus U$. Since $H_n[U]$ is
triangle-free and $G_n$ is asymptotically extremal, we
conclude that $G_n[U]$ spans $o(v^3)$ triangles. By the Removal
Lemma~\cite{ruzsa+szemeredi:78,erdos+frankl+rodl:86} (see
e.g.~\cite[Theorem~2.9]{komlos+simonovits:96}), we can make $G_n[U]$
triangle-free by deleting $o(v^2)$ edges.

If $e(G_n[U])\ge e(H_n[U])$, then we just remove some
edges from $G_n[U]$ until exactly $e(H_n[U])$ edges are left, in which case the obtained graph
$G_n$ belongs to $\C H_{a,n}$ and Theorem~\ref{th:comb} is proved.
Otherwise we obtain the same conclusion for all large $n$
by applying the following lemma to $G_n[U]$ and $s=e(H_n[U])$.

\begin{lemma}\label{lm:UpK3Free} For every $\e>0$ there are $\delta>0$ and $n_0$ such
that for every $K_3$-free graph $G$ on $n\ge n_0$ vertices and every integer $s$
with \beq{m}
 e(G)<s\le\min\left(e(G)+\delta n^2,\floor{n^2/4}\right)
 \eeq
 one can change at most
$\e n^2$ adjacencies in $G$ so that the new graph is still $K_3$-free
and has exactly $s$ edges.\end{lemma}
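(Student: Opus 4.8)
The plan is to fix $\beta=\beta(\e)>0$, fix (in the hard case) a regularity parameter, choose $\delta=\delta(\e,\beta,\dots)>0$ last of all, and split according to the size of $e(G)$. Throughout we use that turning $G$ into a graph $T$ costs exactly $e(G)+e(T)-2\,|E(G)\cap E(T)|$ adjacency changes, and that since we must only raise the edge count by $s-e(G)\le\delta n^2$, it suffices to delete at most about $\tfrac12\e n^2$ edges of $G$ and add back the difference. If $e(G)\le\tfrac14\e n^2$, we simply discard $G$: as $s\le\floor{n^2/4}=\ceil{n/2}\floor{n/2}$, there is a (triangle-free) subgraph $T$ of $K_{\ceil{n/2},\floor{n/2}}$ with exactly $s$ edges, and the cost is at most $e(G)+s\le 2e(G)+\delta n^2<\e n^2$.

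If $e(G)\ge(\tfrac14-\beta)n^2$, the stability version of Mantel's theorem gives a bipartition $V(G)=A\cup B$ with $e(G)-e(G[A,B])\le g(\beta)n^2$, where $g(\beta)\to0$ as $\beta\to0$. Then $|A|\,|B|\ge e(G)-g(\beta)n^2$, so $|A|$ and $|B|$ differ by at most $\eta n$ with $\eta=\eta(\beta)\to0$; relabelling $\le\tfrac12\eta n$ vertices so that the partition becomes balanced changes the number of crossing edges by $\le\eta n^2$, so the balanced bipartition $(A',B')$ still satisfies $e(G[A',B'])\ge e(G)-(g(\beta)+\eta)n^2$. Now delete the remaining $\le(g(\beta)+\eta)n^2$ non-crossing edges and then add crossing edges until exactly $s$ are present — possible because $e(G[A',B'])\le e(G)<s\le\floor{n^2/4}=|A'|\,|B'|$. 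The total cost is $O(g(\beta)+\eta+\delta)\,n^2$, which is $<\e n^2$ once $\beta$ is small and $\delta\ll\e$.

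The remaining range $\tfrac14\e n^2<e(G)<(\tfrac14-\beta)n^2$ is the heart of the matter; here genuine rewiring is unavoidable (a balanced blow-up of $C_5$ admits no edge addition whatsoever, yet is far from bipartite). Apply Szemer\'edi's Regularity Lemma to $G$ and let $R$ be the reduced graph on $[M]$ — join $i$ to $j$ if the pair $(V_i,V_j)$ is $\e_1$-regular with density $\ge d_0$; the triangle counting lemma forces $R$ to be triangle-free. Deleting all edges of $G$ inside the parts, in irregular pairs, and in sparse pairs costs only $\gamma_1 n^2$ edges, with $\gamma_1<\tfrac14\e$ if $M$ is large and $d_0,\e_1$ small; call the result $G^-$, which is ``$R$-partite''. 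It now suffices to raise $e(G^-)$ to $s$ while keeping the graph $R$-partite, hence triangle-free. Let $W=\sum_{ij\in E(R)}|V_i|\,|V_j|$ be the size of the complete $R$-blow-up on the current part sizes; by the Motzkin--Straus theorem $W\le\floor{n^2/4}$. If $W\ge(\tfrac14-\tfrac12\beta)n^2$, the ``density slack'' $W-e(G^-)\ge W-e(G)\ge\tfrac12\beta n^2$ exceeds the $\le(\delta+\gamma_1)n^2$ edges we must insert, so we raise densities of $R$-pairs — adding edges only, keeping what is already there — to hit exactly $s$. If instead $W<(\tfrac14-\tfrac12\beta)n^2$, then writing $f(x)=\sum_{ij\in E(R)}x_ix_j$ on the simplex $\{x\ge0,\ \sum x_i=1\}$, the normalized part-size vector $x^0=(|V_i|/n)_i$ has $f(x^0)=W/n^2$ bounded away from $\max f=1/4$; since $f$ has no local maximum with value in the open interval $(0,1/4)$, a short perturbation of $x^0$ — relocating at most $\sigma n$ vertices between parts, $\sigma\to0$ as $\delta,\gamma_1\to0$, obtained by first-order ascent when the projected gradient $\nabla f(x^0)$ is not too small and along a positive eigendirection of the Hessian (the adjacency matrix of $R$) otherwise — increases $W$ past $s$, after which we again fill in to exactly $s$ edges. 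Translating $T$ back to the vertex set (a relocated vertex costs $\le n$ changes, an untouched one only $O(\sigma n)$), the total cost is $O(\gamma_1+\sigma+\delta)\,n^2<\e n^2$.

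The step I expect to be the genuine obstacle is the part-resizing in this last case: one must quantify how many vertices have to move to gain a prescribed number of edges inside the blow-up. This is governed by a lower bound on the largest eigenvalue of the adjacency matrix of $R$ restricted to the hyperplane $\mathbf 1^\perp$, whenever that eigenvalue is positive — equivalently, a quantitative form of the statement ``a triangle-free $R$ whose complete blow-up is not near-extremal is not at a Motzkin--Straus local maximum'' — and this bound has to be reconciled with how small $\gamma_1$ (hence $1/M$, $d_0$, and $\e_1$) can be taken relative to the number $M$ of parts that the Regularity Lemma returns.
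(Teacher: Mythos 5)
Your small and large cases match the paper's in spirit and are sound: for $e(G)\le\frac14\e n^2$ you replace $G$ outright, and for $e(G)\ge(\frac14-\beta)n^2$ you invoke stability of Mantel's theorem (the paper uses the Erd\H{o}s--Simonovits stability theorem there). The divergence, and the gap, is in the middle range, and you have already put your finger on it in your last paragraph. After regularising and passing to the $R$-blow-up, you must re-balance the part sizes so that $W=\sum_{ij\in E(R)}|V_i|\,|V_j|$ exceeds $s$, while moving only $o_\e(1)\cdot n$ vertices. Your justification is that the Lagrangian $f(x)=\sum_{ij\in E(R)}x_ix_j$ of a triangle-free $R$ has no local maximum in the open interval $(0,1/4)$, so first- or second-order ascent is available. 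That qualitative fact is true, but the lemma needs a \emph{quantitative} escape rate --- a lower bound on the projected gradient or on the top Hessian eigenvalue on $\mathbf 1^\perp$ --- that holds uniformly over all triangle-free reduced graphs $R$ on $M$ vertices, with $M$ being the (huge, $\e_1$- and $d_0$-dependent) number of parts handed to you by the Regularity Lemma, and then that rate has to be compared against $\delta+\gamma_1$, which themselves depend on $1/M,\e_1,d_0$. You supply no such bound, and it is genuinely unclear that a usable one exists. The proposal as written therefore does not close; the obstacle you anticipated is the hole.

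The paper avoids regularity entirely in this range and the argument is strikingly short. Assuming $\e n^2/3\le m\le\floor{n^2/4}-c n^2$ for some $c\gg\delta$, count cherries: $\sum_x\binom{d(x)}{2}\ge n\binom{2m/n}{2}$ by convexity, so by averaging there is an edge $xy$ with $d(x)+d(y)\ge 4m/n-\delta n+2$. Add $cn$ clones of $x$ and $cn$ clones of $y$; the result $G'$ is still triangle-free, has $(1+2c)n$ vertices, and has at least
\[
m+cn\Bigl(\frac{4m}{n}-\delta n\Bigr)+(cn)^2
\]
edges. A uniformly random $n$-subset $U\subseteq V(G')$ retains each edge with probability $\binom n2/\binom{(1+2c)n}{2}$, so some choice of $U$ gives
\[
e(G'[U])\ \ge\ m+\frac{c^2(n^2-4m)-2c\delta n^2}{(1+2c)^2}\ \ge\ m+\delta n^2\ \ge\ s,
\]
using $m\le n^2/4-cn^2$ and $\delta\ll c$. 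Since $G$ and $G'[U]$ agree on at least $(1-2c)n$ common vertices, the edit distance is at most $2cn^2\le\e n^2$, and one finishes by deleting edges down to exactly $s$. You might keep this ``clone two well-chosen vertices and resample'' device in your toolkit: for a soft local-editing statement like this one, it replaces the entire regularity/optimisation machinery.
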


\begin{proof}
 \newcommand{\ccc}{c}
Clearly, it is enough to show how to ensure \emph{at
least} $s$ edges in the final $K_3$-free graph. Given $\e>0$,
choose small positive constants $\ccc\gg \delta$.
Let $n$ be
large and let $s$ satisfy~\req{m}.
Let $m=e(G)$.

We can assume that, for example, $m\ge \e n^2/3$. Also,
assume that $m\le \floor{n^2/4}-\ccc
n^2$ for otherwise
we are done by the Stability Theorem of
Erd\H{o}s \cite{erdos:67a} and Simonovits \cite{simonovits:68} which implies
that $G$ can be transformed into the Tur\'an graph $T_2(n)$
by changing at most $\e n^2$ adjacencies.

The number $p$ of paths of length $2$ in $G$ is
$\sum_{x\in V(G)} {d(x)\choose 2}$ which is at least $n{2m/n\choose 2}$
by the convexity of the function ${x\choose 2}$. By averaging, there is an
edge $xy\in E(G)$ that belongs to at least
 $$
 \frac{2p}m \ge \frac{2n{2m/n\choose 2}}m \ge \frac{4m}n - \delta n
 $$
 such paths (which is just the number of edges between the set
$\{x,y\}$ and its complement).

Let $G'$ be obtained from $G$ by adding $\ccc n$ clones of $x$ and
$\ccc n$ clones of $y$. Thus $G'$ has $n'=(1+2\ccc )n$ vertices and $m'\ge
m+\ccc n(\frac{4m}n - \delta n)+(\ccc n)^2$ edges. If we take a random
$n$-subset $U$ of $V(G')$, then each edge of $G'$ is included with probability
${n\choose 2}/{n'\choose 2}$. Thus there is a choice of an $n$-set $U$
such that the number of edges in $H=G'[U]$ is at least the average,
which in turn is at least
 $$
 \frac{\left(m+\ccc n(\frac{4m}n - \delta n)+(\ccc n)^2\right){n\choose 2}}
{{(1+2\ccc )n\choose 2}}\ge m+
\frac{\ccc ^2(n^2-4m) - 2\ccc \delta n^2}{(1+2\ccc )^2}.
 $$
 This is at least $m+\delta n^2\ge s$ by our assumption on $m$. Since
$G$ and $H$ coincide on the set $V(G)\cap V(H)$ of
least $n-2\ccc n$ vertices,  $G$
can be transformed into the $K_3$-free graph
$H$ by changing at most $2\ccc n^2\le \e n^2$
adjacencies, as required.\end{proof}

\section{Sketch of Proof of $\phi(K_3)\ge \hh(\phi(K_2))$}\label{sketch}

Let us sketch the proof of~\req{LowerBound} from
\cite{razborov:07,razborov:08}, being consistent
with the notation defined there. Let $\rho\df K_2\in \scr F^0_2$. Consider
the ``defect'' functional $f(\phi)=\phi(K_3)-\hh(\phi(\rho))$, where
$\hh$  is defined by~\req{gr}.
We can identify each homomorphism $\phi\in\Hom(\C A^0,\I R)$ with the sequence
 $$
 (\phi(F))_{F\in\C F^0}\in  \I R^{\C F^0}
 $$
 of its values on graphs. Let us equip all products
with the pointwise convergence (or product) topology.
The set $\Hom(\C A^0,\I R)$
is a closed subset of $\I R^{\C F^0}$ as the intersection
of closed subsets corresponding to the relations that an algebra
homomorphism has to satisfy. Thus the set
 $$
 \Hom^+(\scr A^0, {\mathbb R})=\bigcap_{F\in\C F^0}
\left\{\phi\in\Hom(\C A^0,\I R)\mid \phi(F)\ge 0\right\}
 $$
 is closed too. Moreover, it lies inside the compact space $[0,1]^{\C F^0}$,
so it is compact as well.
Since $\hh(x)$ is a continuous function (including the special point
$x=1$),
our functional $f$ is also continuous and
achieves its smallest value on $\Hom^+(\scr A^0, {\mathbb R})$ at some
non-negative homomorphism $\phi_0$. Fix one such $\phi_0$ for the rest of
the proof. Let $a=\phi_0(\rho)$. Let
$t=t(a)$ and $c=c(a)$ be defined as in the Introduction. Let $b=\phi_0(K_3)$.
We have to show that $b\ge \hh(a)$.

If $a=\phi(\rho)\le 1/2$, then $h(a)=0$ and there is nothing to do.

Let us write an explicit formula for the function
$\hh(x)$ defined in~\req{gr} when $1-\frac1t\le x\le 1-\frac1{t+1}$:
 \begin{equation}\label{ht}
  h_t(x)\df \frac{(t-1)\left(t-2\sqrt{t(t-x(t+1))}\right)\left(t+\sqrt{t(t-x(t+1))}\right)^2}{t^2(t+1)^2}.
 \end{equation}

If $a=1-\frac1{t+1}$, then we are done by~\req{goodman}.
So let us assume that $a$ lies in the open interval $(1-\frac1t,1-\frac1{t+1})$. Here
the function $h_t(x)$ is differentiable and it is routine to see that
$h_t'(a)=3(t-1)c$.
A calculation-free intuition  is that
if we add one edge to $H\in \C H_a$ then the
number of triangles increases by $((t-1)c+o(1))n$ (while the effect of
the change in the part sizes is relatively negligible); so we expect that
$h_t'(a) {n\choose 2}^{-1}\approx (t-1)cn {n\choose 3}^{-1}$.
 \hide{When we increase
$a$ by ${n\choose 2}^{-1}$, $c=c(a)$ also changes but only by $O(n^{-2})$; so
the effect of this `change' on the triangle density is negligible.
 }

Let us see which properties $\phi_0$ has. Let $\{G_n\}$
converge to $\phi_0$ with $|V(G_n)|=n$. Let $\e>0$ be a
small constant.

It is impossible that at least $\e n^2$ edges of $G_n$ are each in
more than $((t-1)c+\e) n$ triangles: by removing a uniformly spread
subset of these edges we get a change that is noticeable in the limit
and strictly decreases the defect functional $f$. Thus, if we pick a random
edge from $E(G_n)$, then with probability $1-o(1)$ there are
at most $((t-1)c+o(1)) n$ triangles containing this edge. (Note that
$G_n$ has $\Omega(n^2)$ edges by our assumption $a\ge 1/2$.)
The corresponding  flag algebra
statement \rcpc[(3.3)] reads
 \beq{3.3}
 \rn{\phi_0^E}(K_3^E)\le \frac13 h_t'(a)\quad \mbox{a.e. (=almost everywhere)}.
 \eeq

Let us informally explain~\req{3.3}. It involves counting triangles that
contain a specified edge. Let $\C F^E$
consist of \emph{$E$-flags}, by which we mean
graphs with some two adjacent vertices being labeled as $1$ and $2$.
Any isomorphism has to preserve the labels. We
may represent elements of $\C F^E$ as $(G;x_1,x_2)$,
where $G\in\C F^0$ is a graph and $x_i\in V(G)$
is the vertex that gets label~$i$.
Suppose that we wish to keep track of various subgraph densities and their
finite linear combinations for $E$-flags.
We can view $(F;y_1,y_2)\in\C F^E$ as an evaluation on $\C F^E$ that
on input
$(G;x_1,x_2)$ returns $p((F;y_1,y_2),(G;x_1,x_2))$, the probability
that the $E$-subflag of $G$ induced by
a random $|V(F)|$-set $X$ with $\{x_1,x_2\}\subseteq X
\subseteq V(G)$ is isomorphic to $(F;y_1,y_2)$.

Again, if we know the densities of all $E$-flags with $\ell\ge |V(F)|$
vertices, then we can determine the density of $(F;y_1,y_2)$ by the analog
of~\req{PTildeF}.
So we can define the corresponding linear subspace $\C K^E$ and let
$\C A^E\df \I R\C F^E/\C K^E$. The obvious analog of~\req{pF1F2}
holds, and the corresponding coefficients define a multiplication
on $\I R\C F^E$ that turns $\C A^E$ into a commutative algebra. The
multiplicative identity is $E\in \C F^E$, the unique $E$-flag on $K_2$.
As in the
unlabeled case, the limits of convergent sequences of $E$-flags are precisely
non-negative algebra homomorphisms from $\C A^E$ to the reals
(\rjsl[Theorem~3.3]).

Now, we can turn $G_n$ into an $E$-flag by taking a random edge
uniformly from $E(G_n)$ and randomly labeling its endpoints by $1$ and
$2$. Thus for each $n$ we have a probability distribution on $E$-flags
which weakly converges to the distribution on
$\Hom^+(\C A^E,\I R)$, and it is very important that this distribution can
be uniquely retrieved from $\phi_0$ only (see \rcpc[Section~3.2]). In particular,
it will not depend on the choice of the representing convergent sequence $\{G_n\}$.
In~\req{3.3},  $\rn{\phi_0^E}$
denotes the \emph{extension} of $\phi_0$ (that is,
a random homomorphism drawn according to this distribution)
while $K_3^E$
is the unique $E$-flag with the underlying graph being $K_3$.

Let us consider the effect of removing a vertex $x$ from $G_n$. When
we first remove $d(x)$ edges at $x$, the edge density goes down by
$d(x)/{n\choose 2}$. Next, when we remove the (now isolated) vertex $x$,
the edge density is multiplied by ${n\choose 2}/{n-1\choose
2}=1+\frac2n+O(n^{-2})$. Thus the edge density
changes by $-d(x)/{n\choose 2}+2a/n+O(n^{-2})$.
Likewise, the triangle density
changes by $-K_3^1(x)/{n\choose 3}+3b/n+O(n^{-2})$,
where $K_3^1(x)$ is the number
of triangles per $x$. Thus for all but at most $\e n$ vertices $x$ we
have $(-2d(x)/n+2a)h_t'(a) < -3K_3^1(x)/{n\choose 2}+3b+\e$,
for otherwise by removing $\e n$
such vertices (and taking the limit of a convergent subsequence of the
resulting graphs)
we can strictly decrease the defect functional $f$.
In the flag algebra language this reads as
 \begin{equation}\label{3.2}
 -2h_t'(a)\rn{\phi_0^1}(K_2^1)+2h_t'(a)a\le -3\rn{\phi_0^1}(K_3^1)+3b,
\quad \mbox{a.e.,}
 \end{equation}
 where $\C F^1$ consists of all graphs with one vertex labeled $1$,
$K_2^1,K_3^1\in \C F^1$ ``evaluate'' the edge and triangle density at
the labeled vertex, and  $\rn{\phi_0^1}\in\Hom^+(\C A^1,\I R)$
is the random extension of~$\phi_0$ constructed similarly\footnote{Now
it is an appropriate place to observe that the
superscript in $\C F^0$ refers to the empty type $0$.} to $\rn{\phi_0^E}$.

Note that if we take the expectation of each side of~\req{3.2} with
respect to the random $\rn{\phi_0^1}\in\Hom^+(\C A^1,\I R)$, then we
get $0$. (A calculation-free intuition is that
the edge/triangle density of a graph $G$ is equal to the average density of
edges/triangles sitting on a random vertex of $G$.) Thus we
conclude that~\req{3.2} is in fact equality a.e.\ (\rcpc[(3.2)]).

How can~\req{3.3} and~\req{3.2} be converted into statements about
$\phi_0$? If, for example, one applies the averaging operator $\eval{...}1$  (\cite[Section~2.2]{razborov:07}) to~\req{3.2}, that is, taking the expected value
of~\req{3.2} over $\rn{\phi_0^1}$, then one obtains the identity $0=0$,
as
we have just mentioned. However, one
can  multiply both sides of~\req{3.2} by some $1$-flag $F$ and then average.
(In terms of graphs this corresponds to weighting vertices of $G_n$
proportionally to the density of
$F$-subgraphs rooted at them.) What sufficed in
\cite{razborov:07,razborov:08} was to take $F=K_2^1$. Denoting $e=K_2^1$
for convenience and rearranging terms, we get (\rcpc[(3.4)]):
 \beq{3.4}
 \phi_0(3\eval{eK_3^1}1-2h_t'(a)\eval{e^2}1)=a(3b-2ah_t'(a)).
 \eeq

\hide{
Theorem~3.18 in~\rcpc shows the intuitively obvious fact that the
averaging operator preserves inequalities.
}
Applying the operator $\eval{\dots}E$ (averaging over $\rn{\phi_0^E}$)
directly
to~\req{3.3} is not useful.
Namely, if we take a graph $G\in \C H_a$,
then the graph analog of~\req{3.3} may have slack for edges that connect
two larger parts; thus the obtained inequality will not be best possible.
The trick in~\cite{razborov:07} was first to multiply~\req{3.3}
by the $E$-flag $\bar P_3^E$ whose graph is
the complement of the 3-vertex path. (Thus each edge of $\C H_a$
with slack gets weight 0.) We obtain (\rcpc[(3.5)]):
 \beq{3.5}
 \phi_0(\eval{\bar P_3^EK_3^E}E)\le \frac13 h_t'(a)\phi_0(\eval{\bar P_3^E}E)=\frac19 h_t'(a)\phi_0(\bar P_3).
 \eeq

We will also need the following identity which may be routinely
checked (compare with \rcpc[Lemma~3.2]):
 \beq{Lm3.2}
 3\eval{eK_3^1}1+3\eval{\bar P_3^EK_3^E}E=2K_3+K_4+\frac14\, \bar K_{1,3},
 \eeq
 where $K_{s,t}$ is the complete bipartite graph with part sizes $s$ and $t$.
(Thus $\bar K_{1,3}$ is a triangle plus an isolated vertex.) Also, we have
 \beq{id2}
 \frac13\bar P_3+2\eval{e^2}1=\rho+K_3.
 \eeq
 Now, if we apply
$\phi_0$ to~\req{Lm3.2} and~\req{id2} and combine with
\req{3.4} and  \req{3.5}, then we obtain the following
inequality (see\ \rcpc[(3.6)] where it is also proved that $h_t'(a)+3a-2>0$):
 \beq{3.6}
 b\ge \frac{a(2a-1)h_t'(a)+\phi_0(K_4)+\frac14\, \phi_0(\bar K_{1,3})}
{h_t'(a)+3a-2}.
 \eeq
 If $\phi_0(\bar K_{1,3})=0$ and $\phi_0(K_4)$ is equal to the
limiting $K_4$-density in $\C H_a$, then the right-hand side of~\req{3.6}
is exactly $\hh(a)$. Thus it remains to bound $\phi_0(K_4)$ from below.
In particular,
we are already done if $a\le 2/3$ since every graph in
$\C H_a$ has no (or very few) copies of $K_4$; this is
what was done in \cite{razborov:07}. Of course,
the result of Nikiforov~\cite{nikiforov:11} who determined $g_4(a)$ for all $a$
would suffice
here but in order to
prove our new Theorem~\ref{th:Phi} we need to analyze the argument of \rcpc\ further.

Following \cite[page~612]{razborov:08} define
 \begin{eqnarray}
 \nonumber A&\df & \frac23 h_t'(a)\ =\ 2(t-1)c,\\
 \label{b_original} B&\df&Aa-b\ =\ \frac23 ah_t'(a)-b.
 \end{eqnarray}
 Then, for example, \req{3.2}, which is an equality a.e., can be rewritten as
 \beq{3.9}
 \rn{\phi_0^1}(K_3^1)=A\rn{\phi_0^1}(e)-B\quad \mbox{a.e.}
 \eeq

 Also, let us apply the averaging operator
$\eval{\dots}{E,1}$ to~\req{3.3}.
Informally speaking, given the labeled vertex
$x_1\in V(G_n)$, we pick the second labeled
vertex $x_2$ uniformly at random and take the expectation of~\req{3.3}
multiplied by the indicator function of $x_1$ and $x_2$ being adjacent.
\hide{Note that
 $$
 B=\frac23 ah_t'(a)-b\ge 2(t-1)ac-\hh(a)=t(t-1)c^2>0,
 $$
 so by~\req{3.9} we have $\rn{\phi_0^1}(e)>0$ a.e.
}
Since $\eval{K_3^E}{E,1}=K_3^1$ and $\eval{1}{E,1}=\eval{E}{E,1}=e$, we get (\rcpc[(3.8)])
 \beq{3.8}
 \rn{\phi_0^1}(K_3^1)\le \frac13 h_t'(a)\,\rn{\phi_0^1}(e)=\frac A2\,\rn{\phi_0^1}(e)\quad \mbox{a.e.}
 \eeq
 The combinatorial meaning of the last step is very simple: if each edge is in at most $(t-1)cn$ triangles, then a given vertex $x_1$ can belong to at most $\frac12 d(x_1)(t-1)cn$ triangles.

From~\req{3.9} and~\req{3.8} we obtain
 \beq{3.11}
 0<\frac BA\le \rn{\phi_0^1}(e)\le \frac{2B}A\quad \mbox{a.e.}
 \eeq

Now let us take any \emph{individual} $\phi^1\in \Hom^+(\C A^1,\I R)$ for
which~\req{3.9}--\req{3.11} hold. Let
 \beq{psi}
 \psi\df \phi^1\pi^e\in
\Hom^+(\C A^0,\I R),
 \eeq
 see \rcpc[page~612].
 Informally, we take an arbitrary vertex $x$ of $G_n$
and assume that the density of edges/triangles containing $x$
satisfies~\req{3.9}--\req{3.11}. Then $\psi$ corresponds to taking
the subgraph $H_n$ of $G_n$ induced by the neighborhood of $x$. For
example, the edge density of $H_n$ can be calculated by taking
the triangle density at $x$ and multiplying it by
${n-1\choose 2}/{d(x)\choose 2}\approx (\frac{n-1}{d(x)})^2$. In the flag
algebra formalism this reads (\rcpc[(3.13)])
 \beq{3.13}
 \psi(\rho)=\frac{\phi^1(K_3^1)}{(\phi^1(e))^2}=\frac{A\phi^1(e)-B}{(\phi^1(e))^2}
=\frac{z-\mu}{z^2},
 \eeq
 where following \rcpc[page~612] we define
 \beq{zmu}
 z\df\phi^1(e)/A\quad\mbox{and}\quad \mu\df B/A^2.
 \eeq
 Some
calculations based on~\req{goodman} show that (\rcpc[(3.15)])
\beq{3.15}
 \psi(\rho)\le 1-\frac1t.
 \eeq

Summarizing (in the graph theory language): the degree of
a typical $x\in V(G_n)$ determines the edge density of $G_n[N(x)]$,
the subgraph induced by the
neighborhood $N(x)$ of $x$.
Moreover, this density is at most $1-\frac1t+o(1)$. This
give us a strategy for bounding the number of $K_4$'s in $G_n$ from below:
use induction on $t$ to bound the number of $K_3$'s in
$N(x)$ and then sum this over all $x\in V(G_n)$ (and divide
by~4). Unfortunately, this bound on $\psi(K_3)$
involves radicals and it is not clear how to average it, since
$t(\psi(\rho))$ may assume different values for different
choices of $\phi^1$. These difficulties are overcome
by proving
the following lower bound on $\phi^1(K_4^1)=\psi(K_3)
(\phi^1(e))^3$ which
is a linear function of $\phi^1(e)$ that does not depend on $t(\psi(\rho))$
(\rcpc[(3.24)]):
 \beq{3.24}
 \phi^1(K_4^1)\ge A^3\left(\frac32(1-2\mu)\left(\frac{\phi^1(e)}A-\eta_{t-1}\right)
+ \eta_{t-1}^3 \, \frac{(t-2)(t-3)}{(t-1)^2}\right),
 \eeq
 where, for $1\le s\le t-1$, $\eta_s$ is the unique root of the equation
 \beq{3.17}
 \frac{\eta_s-\mu}{\eta_s^2}=1-\frac1s
 \eeq
 that lies in the interval $[\mu,2\mu]$, see \rcpc[(3.17)].
Thus the random extension $\rn{\phi_0^1}$ satisfies~\req{3.24} a.e.\
and we can average it, obtaining a lower bound on $\phi_0(K_4)$, which
is \rcpc[(3.25)]. (Note that
the expectation of $\rn{\phi_0^1}(K_4^1)$ is
$\phi_0(K_4)$.) It turns out that this lower bound, when substituted
into~\req{3.6} suffices for proving the desired conclusion $b\ge \hh(a)$. The
derivations (also those of~\req{3.24}) are rather messy,
do not involve any genuine flag algebras calculations and are not needed
for our proof. So we omit them
and refer the reader to \rcpc\ for all details.

\section{Proof of Theorem~\ref{th:Phi}}\label{proof}

All notation here is compatible with that of \cite{razborov:07,razborov:08}.
As before, let $0$, $1$, and $E$
denote
the (unique) types with respectively 0, 1 and 2 (adjacent) vertices. Also,
$\rho\df K_2\in\C F_2^0$ and $e\df K_2^1\in\C F^1_2$ are the (unique)
$0$- and $1$-flags having two adjacent vertices. In the arXiv version of
our paper (\texttt{arXiv.org:1204.2846}) we offer a \emph{Mathematica} code that verifies some laborious flag
algebra (in)equalities that are needed here.

Let $\Phi\subseteq \Hom^+(\scr A^0, {\mathbb R})$ be
the set of the conjectured extremal homomorphisms defined in Section~\ref{flag}.
Let $\phi_0\in
\Hom^+(\scr A^0, {\mathbb R})$ be arbitrary such that
$\phi_0(K_3)=\hh(\phi_0(\rho))$. We have to show that
$\phi_0\in\Phi$. Let $a\df \phi_0(\rho)$ and $b\df \phi_0(K_3)$.

We prove Theorem~\ref{th:Phi} (that is, the claim that $\phi_0\in\Phi$)
by induction on the parameter $t=t(a)$ that was defined by~\eqref{t}. If $t=1$,
then $a\le 1/2$,
$b=0$, and there is nothing to do: every
non-negative homomorphism of triangle density 0 is in $\Phi$ by
definition.
Let $t\ge 2$ and
assume that we have proved the theorem for all smaller $t$.

Suppose first that $a=1-\frac1s$ for some integer $s$. Apply
Theorem~\ref{th:stab} to any sequence $\{G_n\}$ convergent to
$\phi_0$, say with $|V(G_n)|=n$, to conclude that $G_n$ is $o(n^2)$-close to the Tur\'an graph $T_s(n)$ in the edit distance. Clearly,
when we change $o(n^2)$ edges in $G_n$, then the density of any fixed
graph $F$ changes by $o(1)$ so $\phi_0$ is still the limit of
$\{G_n\}$. Since the limit of $\{T_s(n)\}$ is in
$\Phi$, we are done in this case.

So let $a$ lie in the open interval $(1-\frac 1t, 1-\frac
1{t+1})$. Let $c$ be defined by~\req{CExplicit}.
We assume that the reader is familiar with
the proof in  \cite{razborov:08}; part of it was
sketched in Section~\ref{sketch}, and
we utilize the notation and facts established
there.

Since $\phi_0$ is extremal,
we know that $b=\hh(a)$. This gives some noticeable simplifications
to \eqref{b_original}, \req{zmu} and \req{3.17}:
\begin{eqnarray}
 B &=& t(t-1)c^2, \nonumber\\
\mu &=& \frac{B}{A^2}\ =\ \frac t{4(t-1)},\label{mu}\\
\eta_{t-1} &=& 1/2\nonumber.
\end{eqnarray}

The \emph{support} of the random extension $\rn{\phi_0^\sigma}$ discussed in the
previous section is
the smallest closed subset of
$\Hom^+(\C A^\sigma,\I R)$ of measure $1$; it will be denoted by
$S^\sigma(\phi_0)$. A useful property of the support
is that if some closed property has measure 1, then \emph{every} element
of $S^\sigma(\phi_0)$ has this property.
We fix an arbitrary $\phi^1\in S^1(\phi_0)$. Inequalities~\req{3.9}--\req{3.11}
hold a.e.\ and define a closed subset, thus $\phi^1$ satisfies them. In particular, \req{3.11}
simplifies to
 \beq{3.11simple}
 0<\frac{tc}2\le \phi^1(e)\le tc<1.
 \eeq
So, we can define  $\psi$ by~\req{psi}.

Let us prove that $\psi$ is extremal (that is, has the smallest
possible triangle density given its edge density). It is this part
of our proof that most heavily relies upon \rcpc; it basically amounts to
checking that the extremality assumption $b=h(a)$ makes tight sufficiently
many useful inequalities proven there.

\begin{claim} \label{inductive}
$\psi\in\Phi$ and $\psi(\rho)\in \left[1-\frac 1{t-1}, 1-\frac 1t\right]$. \end{claim}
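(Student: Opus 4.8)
The plan is to read Claim~\ref{inductive} off the equality conditions that $b=\hh(a)$ imposes on the argument of Section~\ref{sketch}, together with the monotonicity of the one-variable functions involved and the induction hypothesis. Recall that $z\df\phi^1(e)/A$ (see~\req{zmu}) satisfies $z\in[\mu,2\mu]$ by~\req{3.11simple}, with $\mu=\frac t{4(t-1)}$ by~\req{mu}, and that $\psi(\rho)=\psi(K_2)=\frac{z-\mu}{z^2}=:\beta(z)$ and $\psi(K_3)=\phi^1(K_4^1)/\phi^1(e)^3$ by~\req{3.13}. First I would record that, since $\phi_0(K_3)=\hh(a)$, every inequality used in Section~\ref{sketch} to prove $\phi_0(K_3)\ge\hh(a)$ --- in particular~\req{3.6} and the averaged bound~\rcpc[(3.25)] obtained from~\req{3.24} --- holds with equality; as $\rn{\phi_0^1}(K_4^1)$ is pointwise at least the right-hand side of~\req{3.24} but has the same $\rn{\phi_0^1}$-expectation, \req{3.24} is tight a.e., hence for the fixed $\phi^1\in S^1(\phi_0)$. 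Using $\phi^1(e)=Az$, $\eta_{t-1}=\frac12$ and $A=2(t-1)c$, this determines
$$
\psi(K_3)=\frac{\ell(z)}{z^3},\qquad \ell(z)\df\tfrac32(1-2\mu)\left(z-\tfrac12\right)+\tfrac18\cdot\tfrac{(t-2)(t-3)}{(t-1)^2},
$$
where $\ell$ is the affine function on the right-hand side of~\req{3.24}.

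The crux is to locate $z$, and for that I would examine how~\req{3.24} is derived in \rcpc. It comes from
$$
\phi^1(K_4^1)=\psi(K_3)\,\phi^1(e)^3\ \ge\ \hh\!\left(\psi(K_2)\right)\phi^1(e)^3\ =\ A^3z^3\hh(\beta(z))
$$
combined with the fact that, on $[\mu,2\mu]$, the piecewise-defined function $z^3\hh(\beta(z))$ dominates $\ell(z)$, with equality holding exactly for $z\in\left[\tfrac12,2\mu\right]$. The equality on $\left[\tfrac12,2\mu\right]$ is a direct computation: for such $z$ one has $\beta(z)\in\left[1-\frac1{t-1},1-\frac1t\right]$, so $\hh(\beta(z))=\hh_{t-1}(\beta(z))$, and substituting $c=\frac1{2(t-1)z}\in\left[\frac1t,\frac1{t-1}\right]$ into~\req{c} and~\req{gr} (with parameter $t-1$) yields the identities $\beta(z)=\frac{z-\mu}{z^2}$ and $z^3\hh_{t-1}(\beta(z))=\ell(z)$; for $z\in\left[\mu,\tfrac12\right)$ one has $\beta(z)<1-\frac1{t-1}$ and the domination is strict (trivially where $\ell(z)\le 0$, and by the estimates underlying~\req{3.24} otherwise). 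Since~\req{3.24} is tight, both inequalities in $\phi^1(K_4^1)\ge A^3z^3\hh(\beta(z))\ge A^3\ell(z)$ are equalities, whence $z\in\left[\tfrac12,2\mu\right]$ and $\psi(K_3)=\hh(\beta(z))=\hh(\psi(\rho))$; in particular $\psi$ is extremal. Finally, $\beta$ is non-decreasing on $[\mu,2\mu]$ (indeed $\beta'(z)=(2\mu-z)/z^3\ge 0$ there), so $\psi(\rho)=\beta(z)\in\left[\beta\!\left(\tfrac12\right),\beta(2\mu)\right]=\left[1-\frac1{t-1},1-\frac1t\right]$, which is the interval asserted in the claim.

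It remains to deduce $\psi\in\Phi$. Since $\psi(\rho)\le1-\frac1t$ and $\psi(K_3)=\hh(\psi(\rho))$, the homomorphism $\psi$ is covered by a case of this very proof, but with $t$ replaced by the smaller value $t(\psi(\rho))$: if $\psi(\rho)=1-\frac1s$ for an integer $s$ then $\psi\in\Phi$ by Theorem~\ref{th:stab} (or trivially when $s=1$), exactly as in the first part of the proof; otherwise $t(\psi(\rho))=t-1<t$ and the induction hypothesis applies to $\psi$ and gives $\psi\in\Phi$. I expect the main obstacle to be the crux above: extracting from \rcpc the precise statement that $z^3\hh(\beta(z))\ge\ell(z)$ on $[\mu,2\mu]$ with equality exactly on $\left[\tfrac12,2\mu\right]$, so that tightness of~\req{3.24} becomes equivalent to $z\ge\tfrac12$; everything else is a one-variable calculation or a direct appeal to the induction hypothesis.
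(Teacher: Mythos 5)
Your strategy is essentially the paper's: read off the equality conditions forced on the chain of inequalities from \rcpc\ by $b=\hh(a)$, deduce from tightness of \req{3.24} that $\psi$ is extremal, then locate $\psi(\rho)$ and invoke the global induction. The one-variable computations you carry out (that $z^3 h_{t-1}(\beta(z))=\ell(z)$ on $[\frac12,2\mu]$, that $\beta$ is nondecreasing on $[\mu,2\mu]$, and that $\beta(\frac12)=1-\frac1{t-1}$, $\beta(2\mu)=1-\frac1t$) are correct, and your re-derivation of $\psi(\rho)\le1-\frac1t$ from $z\le2\mu$ is equivalent to the paper's appeal to \rcpc[(3.15)].

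The crux, however, is left as an assertion. You claim that on $[\mu,\frac12)$ the inequality $z^3\hh(\beta(z))>\ell(z)$ is strict, attributing this to ``the estimates underlying \req{3.24} otherwise.'' That is exactly the non-routine point, and your proposal never establishes it; it is not a one-variable computation in any obvious sense because for $z<\frac12$ one has $\beta(z)<1-\frac1{t-1}$, so $\hh(\beta(z))=h_s(\beta(z))$ for some $s\le t-2$, and one must show that the piecewise function $z^3 h_s(\beta(z))$ strictly exceeds the same affine lower bound $\ell(z)$, \emph{given} the particular value $\mu=\frac t{4(t-1)}$. The paper does this by tracing the case $z\le\eta_{t-1}$ of the proof of \rcpc[Claim~3.3]: tightness of \req{3.24} then forces \rcpc[(3.21)] and \rcpc[(3.23)] to be tight, and the analysis on page 615 of \rcpc\ shows that tightness of \rcpc[(3.23)] requires $\mu=\frac{s+1}{4s}$; combined with $\mu=\frac t{4(t-1)}$ this yields $s=t-1$, contradicting the assumption $s\le t-2$. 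Your direct ``strict domination'' formulation is a clean repackaging of this contradiction, and the conclusion is very plausibly correct, but the verification you flag as ``the main obstacle'' is precisely the step that requires the case-by-case tightness analysis from \rcpc; without it the proof is incomplete. Everything else in your proposal matches the paper.
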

\begin{proof} Let $s$ be such that $\psi(\rho)\in (1-\frac1{s},1-\frac1{s+1}]$.

We know that the result of averaging \req{3.24} (which is \rcpc[(3.25)]) is an
equality. Hence \req{3.24} is equality a.e., and by the same token as before,
it holds for every $\phi^1\in S^1(\phi_0)$. The analysis
of the calculations in \rcpc\ shows that \rcpc[(3.16)] (which is
equivalent to $\psi(K_3)\ge
h_s(\psi(\rho))$) is also equality.
Thus the homomorphism $\psi\in \Hom^+(\scr A^0,
{\mathbb R})$ is extremal. By \req{3.15}
we have that $s\le t-1$.
The (global) induction assumption implies that $\psi\in\Phi$.

We still have to show the second part of the claim when $t\ge 3$.
Recall that $\psi(\rho)=\frac{z-\mu}{z^2}$
by~\req{3.13}. In view of~\req{mu}, the quadratic equation $\frac{z-\mu}{z^2}=1-\frac1{t-1}$
has two roots:
$z=\frac12$ and $z=\frac{t}{2(t-2)}$.
By (\ref{3.11simple}),
it is impossible that $z\ge \frac{t}{2(t-2)}$ (which is equivalent to $\phi^1(e)\ge
\frac{t(t-1)}{t-2}\,c$). Thus, if we assume that $s\le t-2$, then $\psi(\rho)\le
1-\frac1{t-1}$ and
$z\le 1/2=\eta_{t-1}$.

Thus, when we apply the proof of \rcpc[Claim~3.3], the case $z\le \eta_{t-1}$
takes place.  This implies that
\cite[(3.21)]{razborov:08} is tight.
Then \rcpc[(3.23)] is also tight. Its proof on page 615 of \rcpc\ shows that
this is possible
only if $\mu=\frac{s+1}{4s}$ is the largest element of
$[\frac z2,\frac{s+1}{4s}]$, the admissible interval for $\mu$.
By~(\ref{mu}) we have that $s=t-1$, as required.
\end{proof}

Claim \ref{inductive} alone suffices to verify Theorem \ref{th:Phi} in the toy-like
case $\phi_0(\bar P_3)=0$, where $\bar P_3$ denotes the complement of the 3-vertex path;
combinatorially this means that $\phi_0$ is the limit of complete multipartite graphs.
Indeed, $\phi_0(\bar P_3)=0$ obviously implies that the homomorphism $\psi$ defined by
\req{psi} also satisfies $\psi(\bar P_3)=0$ and, moreover, $\phi_0$ is equal to the join
$\vee(0,\psi; 1-\phi^1(e), \phi^1(e))$. The latter fact readily follows from definitions;
combinatorially it means that every vertex $x$ in a complete multipartite graph $G_n$ defines
its decomposition as the join $G_n= I_n\vee H_n$, where $H_n$ is the subgraph induced by all
neighbors of $x$ and $I_n$ is the independent set induced by all non-neighbors. Thus, applying
Claim \ref{inductive} inductively, we conclude that every $\phi_0\in\Phi$ with $\phi_0(\bar P_3)=0$
necessarily has the form  $\vee(\underbrace{0,\dots,0}_{k \mbox{ \scriptsize times}};
c_1,\ldots, c_k)$, where, say, $0<c_1\leq \ldots\leq c_k$, for some {\em fixed finite $k$}. We are
only left to prove that $c_2=\ldots=c_k$, and the simplest way of doing this is to invoke
\cite[Claim 2.13]{nikiforov:11} used by Nikiforov for essentially identical purpose:
\begin{claim}
Let $\gamma_3\geq \gamma_2\geq \gamma_1>0$ be real numbers satisfying
\begin{eqnarray*}
\gamma_1+\gamma_2+\gamma_3 &=& \alpha,\\
\gamma_1\gamma_2+\gamma_2\gamma_3+\gamma_3\gamma_1 &=& \beta,
\end{eqnarray*}
and let $\gamma_1\gamma_2\gamma_3$ be minimized subject to these two constraints. Then $\gamma_2=\gamma_3$.
\end{claim}

\bigskip
The case $\phi_0(\bar P_3)>0$ is way more elaborate, and this is where the main novelty of our
contribution lies. We begin with the following claim.
The intuition behind it is as follows.
Identity~(\ref{3.9}) gives a linear relation between triangle and edge
densities via a vertex. By Claim~\ref{inductive} we know that (\ref{3.9})
also holds for the subgraph induced by the neighborhood
of almost every vertex $x\in V(G)$. If we average
this for all choices of $x$, then we get some linear relation between
the densities of $K_4$, $K_3$, and $K_2$ that has to hold for all
extremal homomorphisms. Repeating we get a linear relation for $K_5$, $K_4$, and
$K_3$, and so on.

\begin{claim}\label{KrRec} For every $r\ge 3$, we have
 \begin{equation}\label{eq:KrRec}
 \phi_0(K_r)=2(t-r+2)c\phi_0(K_{r-1})-(t-r+3)(t-r+2)c^2\phi_0(K_{r-2}).
 \end{equation}
 \end{claim}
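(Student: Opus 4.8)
The plan is to deduce \eqref{eq:KrRec} from a pointwise identity satisfied by (almost) every element of $S^1(\phi_0)$ and then average. For $r=3$ the statement reads $b=2(t-1)ca-t(t-1)c^2$ (recall $\phi_0(K_1)=1$, $\phi_0(K_2)=a$, $\phi_0(K_3)=b$), which is simply the defining relation $B=Aa-b$ rewritten through $A=2(t-1)c$ and the simplified value $B=t(t-1)c^2$; equivalently it is the result of applying $\eval{\cdot}{1}$ to \req{3.9}, using $\eval{K_3^1}{1}=K_3$ and $\eval{e}{1}=\rho$. So from now on fix $r\ge4$ and an arbitrary $\phi^1\in S^1(\phi_0)$; in particular $\phi^1$ satisfies \req{3.9}, \req{3.11simple} and \req{3.13}.

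Let $\psi=\phi^1\pi^e\in\Hom^+(\scr A^0,{\Bbb R})$, the homomorphism recording densities in the neighborhood of the root, so that the general form of \req{3.13} reads $\phi^1(K_j^1)=\phi^1(e)^{j-1}\psi(K_{j-1})$ for all $j\ge1$. By Claim~\ref{inductive} we have $\psi\in\Phi$ and $t(\psi(\rho))=t-1$; let $c'\df c(\psi(\rho))$ be computed with parameter $t-1$, that is, from \eqref{CExplicit} with $t$ replaced by $t-1$, so $c'\in[\tfrac1t,\tfrac1{t-1}]$. The crucial point is the identity
$$ \phi^1(e)\,c'=c, $$
valid for \emph{every} $\phi^1\in S^1(\phi_0)$. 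Its proof is a short computation. By \req{3.13} and \eqref{zmu}, $\psi(\rho)=(z-\mu)/z^2$ with $z=\phi^1(e)/A$, and by \eqref{mu}, $\mu=t/(4(t-1))$; plugging this value of $\psi(\rho)$ into \eqref{CExplicit} for parameter $t-1$ one finds $(t-1)-t\,\psi(\rho)=\bigl(2(t-1)z-t\bigr)^2/\bigl(4(t-1)z^2\bigr)$, so the radical in \eqref{CExplicit} equals $\bigl(t-2(t-1)z\bigr)/(2z)$ --- the sign being forced because \req{3.11simple} gives $z\in[\mu,2\mu]$, hence $2(t-1)z\le t$. This yields $c'=1/\bigl(2(t-1)z\bigr)$, and since $\phi^1(e)=Az=2(t-1)cz$ we get $\phi^1(e)\,c'=c$.

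Next I would invoke that $\psi$ itself obeys the clique recursion. Since $\psi\in\Phi$ is extremal with parameter $t-1<t$, the outer induction hypothesis (Theorem~\ref{th:Phi} together with the present claim, for parameter $t-1$) gives, for all $m\ge3$,
$$ \psi(K_m)=2(t-m+1)c'\,\psi(K_{m-1})-(t-m+2)(t-m+1)(c')^2\,\psi(K_{m-2}) $$
(alternatively this is a routine check from the explicit description of the elements of $\Phi$ via \eqref{DefJoin}, as a clique meets each of the $t-2$ ``$0$''-parts at most once and the triangle-free part at most twice). Taking $m=r-1$, multiplying through by $\phi^1(e)^{r-1}$, and using $\phi^1(K_j^1)=\phi^1(e)^{j-1}\psi(K_{j-1})$ together with $c=\phi^1(e)\,c'$ to absorb the surplus powers of $\phi^1(e)$, we arrive at
$$ \phi^1(K_r^1)=2(t-r+2)c\,\phi^1(K_{r-1}^1)-(t-r+3)(t-r+2)c^2\,\phi^1(K_{r-2}^1)\qquad\text{a.e.} $$
Applying $\eval{\cdot}{1}$ (i.e.\ taking the expectation over $\rn{\phi_0^1}$) and using $\eval{K_j^1}{1}=K_j$ turns this into \eqref{eq:KrRec}, finishing the proof.

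I expect $\phi^1(e)\,c'=c$ to be the real work: it is the only step that genuinely uses $b=\hh(a)$ --- entering through the simplified constants \eqref{mu} --- together with \req{3.13} and the two-sided bound \req{3.11simple}, which pins $z$ to $[\mu,2\mu]$ and thereby selects the branch of the square root in \eqref{CExplicit}. One should also note that when $\psi(\rho)$ equals $1-\tfrac1{t-1}$ or $1-\tfrac1t$ the value $t(\psi(\rho))$ is formally ambiguous, but $c'$ and the recursion for $\psi$ are unchanged, so the argument applies verbatim; everything else --- the transfer identity, the cancellation of powers of $\phi^1(e)$, and the averaging --- is bookkeeping.
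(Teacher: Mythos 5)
Your argument is correct and follows essentially the same route as the paper: the core of both is the identity $\gamma = c/\phi^1(e)$ (equivalently your $\phi^1(e)\,c'=c$), the transfer formula $\psi(K_s)=\phi^1(K_{s+1}^1)/\phi^1(e)^s$, and averaging over $\rn{\phi_0^1}$. The one organizational difference is that the paper runs a separate inner induction on $r$ whose hypothesis (``\eqref{eq:KrRec} holds for all extremal homomorphisms'') is then applied to $\psi$, whereas you bypass this by noting that $\psi\in\Phi$ (already secured by Claim~\ref{inductive} and the outer induction on $t$), so that the recursion for $\psi$ can be read off from the explicit description \eqref{DefJoin}; this is a slight streamlining, and your remark that it can ``alternatively'' be checked directly is exactly the point. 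You also supply more computational detail on $\phi^1(e)c'=c$ than the paper's terse ``this value satisfies \eqref{gamma} by \eqref{3.13}'', including the use of \eqref{3.11simple} to pin $z\in[\mu,2\mu]$ and hence select the correct branch of the square root in \eqref{CExplicit}; this is precisely the verification the paper is implicitly delegating to the reader (or to the Mathematica file).
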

 \begin{proof} We use induction on $r$. If $r=3$, then the identity relates
$b=\phi_0(K_3)$ and $a=\phi_0(\rho)$. Both of these parameters have been
explicitly expressed
in terms of $c$ and $t$ and the desired identity~(\ref{eq:KrRec}) can be
routinely checked.

Suppose that (\ref{eq:KrRec}) is true (for all extremal $\phi_0$).
Let us prove it for $r+1$.
Let $\phi^1\in S^1(\phi_0)$ be arbitrary
and let $\psi=\phi^1\pi^e$. By Claim~\ref{inductive}
we know that
$\psi(\rho)\in [1-\frac1{t-1},1-\frac1{t}]$. Let $\gamma=c(\psi(\rho))$,
where $c(x)$ is defined by~\req{c}, that is, $\gamma$ is the unique root
of
 \beq{gamma}
 2\left({t-1\choose 2}\gamma^2+(t-1)\gamma(1-(t-1)\gamma)\right)=\psi(\rho)
 \eeq
 with  $\gamma\ge 1/t$. We have that $\gamma=c/\phi^1(e)$. Indeed,
this value satisfies~\req{gamma} by~(\ref{3.13})
and is at least $1/t$ by~\req{3.11simple}.
 \hide{Indeed,
  $$
 2\left(\beta^2{t-1\choose 2}+\beta(t-1)(1-(t-1)\beta)\right)=\psi(\rho)
=\frac{z-\mu}{z^2}
=\frac{\frac d{2(t-1)c}-\frac{t}{4(t-1)}}{(\frac d{2(t-1)c})^2}.
 $$
 (and Mathematica confirms that this identity  indeed holds).
}
(An informal reason is that all derived inequalities are sharp for $\Phi$ and,
if we pass to a neighborhood of a vertex in some $H\in \C H_a$, then
its $t-2$ largest parts have the same (absolute) sizes as the
$t-1$ largest parts of $H$.)

By Claim~\ref{inductive}, we have that $t(\psi(\rho))=t-1$.
Thus, by the induction assumption,
$$
 \psi(K_{r})=2(t-r+1)\gamma\psi(K_{r-1}) - (t-r+2)(t-r+1) \gamma^2 \psi(K_{r-2}).
 $$
  If we now substitute $\gamma=c/\phi^1(e)$ and $\psi(K_s)=\phi^1(K_{s+1}^1)/
(\phi^1(e))^s$, cancel all occurrences of $(\phi^1(e))^{-r}$, and average the result,
we obtain exactly what we need.
  \end{proof}

Let us define $\hhh{r}(1)=1$ and, for $0\le x<1$,
 $$
 \hhh{r}(x)\df r!\left({t\choose r}c^r+{t\choose r-1}c^{r-1}(1-tc)\right),
 $$
 where $c=c(x)$ is again defined by~\req{c}. In other words, $\hhh{r}(x)$ is
the limiting density of $K_r$ in the graphs from $\C H_{x,n}$ as
$n\to\infty$. (In particular, $\hhh{3}$ is equal to our function $h$.) It is an upper bound on $g_r(x)$ and, as it was recently shown
by Reiher~\cite{reiher:Kr}, they are in fact equal.

Claim~\ref{KrRec} has the following useful corollary.

\begin{claim}\label{Kr} Let $r\ge 3$. Then $\phi_0(K_r)=\hhh{r}(a)$, that
is, each clique has the ``right" density. In particular,
$\phi_0(K_s)=0$ for $s\ge t+2$.\end{claim}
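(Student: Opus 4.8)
The plan is to exploit the recursion of Claim~\ref{KrRec}, which holds for \emph{every} extremal homomorphism and expresses $\phi_0(K_r)$ (for $r\ge 3$) through $\phi_0(K_{r-1})$, $\phi_0(K_{r-2})$, $t$, and $c$. The two seed values are forced: $\phi_0(K_1)=1$, since a single vertex always induces $K_1$, and $\phi_0(K_2)=a$. Consequently the whole sequence $\big(\phi_0(K_r)\big)_{r\ge1}$ is determined by $t$ and $c$ alone, hence is the same for every extremal homomorphism $\phi$ with $\phi(\rho)=a$. So it is enough to name \emph{one} such homomorphism whose clique densities are manifestly the numbers $\hhh{r}(a)$, and then appeal to this uniqueness.

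For that, fix $H_n=K(V_1,\dots,V_{t+1})\in\C H_{a,n}$ with $|V_1|=\dots=|V_t|=\floor{cn}$ and $|V(H_n)|=n$, and let $\psi\df\lim_{n\to\infty}p^{H_n}$, which exists by the definition of $\C H_a$. By the very definition of $\hhh{r}$ as the limiting $K_r$-density in $\C H_{a,n}$, we get $\psi(K_r)=\hhh{r}(a)$ for all $r$; in particular $\psi(K_3)=\hhh{3}(a)=\hh(a)=g_3(a)$, so $\psi$ is extremal, and trivially $\psi(K_1)=1$ and $\psi(K_2)=a$. Applying the first paragraph to $\psi$ and $\phi_0$ then yields $\phi_0(K_r)=\psi(K_r)=\hhh{r}(a)$ for every $r\ge1$, by induction on $r$. (An alternative to introducing $\psi$ is to verify directly, via Pascal's rule, that the explicit formula for $\hhh{r}(a)$ satisfies the recursion~\req{eq:KrRec}; this is a routine computation and is among the identities checked by the accompanying \emph{Mathematica} code.)

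The final assertion is then immediate: for $s\ge t+2$ we have $s-1\ge t+1>t$, so $\binom{t}{s}=\binom{t}{s-1}=0$, whence $\hhh{s}(a)=0$ and $\phi_0(K_s)=0$. (We are in the case $a<1$, so the closed form for $\hhh{s}$ applies.) I do not expect a real obstacle here; the one point deserving a line of justification is that Claim~\ref{KrRec} legitimately applies to the auxiliary homomorphism $\psi$, i.e.\ that $\psi$ is genuinely extremal --- and this follows from the already established identity $g_3(a)=\hh(a)$ rather than from Theorem~\ref{th:Phi} itself, so the argument is not circular.
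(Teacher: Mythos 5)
Your proof is correct and follows the paper's approach: induction on $r$ via Claim~\ref{KrRec}, with the base case forced by extremality ($\phi_0(K_3)=g_3(a)=\hhh{3}(a)$). The paper's one-line argument implicitly rests on the routine algebraic identity that $\hhh{r}(a)=2(t-r+2)c\,\hhh{r-1}(a)-(t-r+3)(t-r+2)c^2\hhh{r-2}(a)$, whereas you supply that verification by comparing with the complete-multipartite limit (noting that Claim~\ref{KrRec} applies to any extremal homomorphism with the same $a$), which is a perfectly valid and slightly more conceptual way to close the induction.
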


\begin{proof} This is true for $r=3$ as
$\phi_0(K_3)=g_3(a)$. The general case follows from
Claim~\ref{KrRec} by induction
on $r$.\end{proof}

Recall that we assume $\phi_0(\bar P_3)>0$ (as the case $\phi_0(\bar P_3)=0$ was
already tackled before). We need a few auxiliary results.
For a graph $F\in\C F_\ell^0$, let $F^{(1)}\in\C F_{\ell+1}^1$ be the $1$-flag
obtained by adding a new vertex $x$ that is connected to all vertices of $F$
(i.e., taking the join $F\vee K_1$) and labeling $x$ as $1$.

\begin{claim} \label{p3}
$\phi_0(\eval{\bar P_3^{(1)}}{1})>0$.
\end{claim}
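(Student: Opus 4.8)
Here is how I would approach proving Claim~\ref{p3}.

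The plan is to argue by contradiction, assuming $\phi_0(\eval{\bar P_3^{(1)}}{1})=0$. Unlabelling turns this into a statement about $4$-vertex graphs: the only $4$-vertex graph $F$ possessing a vertex $y$ with $(F,y)\cong\bar P_3^{(1)}$ is the \emph{paw} $\C P$ (a triangle with one pendant edge, with $y$ its vertex of degree $3$), and each induced paw of a graph yields exactly one such rooting; hence $\eval{\bar P_3^{(1)}}{1}=\tfrac14\,\C P$, and our assumption reads $\phi_0(\C P)=0$. Since $\phi^1(\bar P_3^{(1)})=\phi^1(e)^3\,\psi(\bar P_3)$ for $\psi=\phi^1\pi^e$ (see \eqref{3.13} and \eqref{psi}) and $\phi^1(e)>0$ by \eqref{3.11simple}, this is equivalent to $\psi(\bar P_3)=0$ for every $\phi^1\in S^1(\phi_0)$. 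I shall also use $\phi_0(\bar K_{1,3})=0$: since $b=\hh(a)$, the inequality \eqref{3.6} together with $\phi_0(K_4)=\hhh4(a)$ (Claim~\ref{Kr}) leaves no slack, which forces $\phi_0(\bar K_{1,3})=0$. Fix $\{G_n\}$ with $|V(G_n)|=n$ converging to $\phi_0$; the cases $t\ge3$ and $t=2$ are handled differently.

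For $t\ge3$: every $4$-set $\{u,v,w,x\}$ of $G_n$ with $\{u,v,x\}$ a triangle and $w\not\sim u,v$ induces a paw or a $\bar K_{1,3}$, so there are $o(n^4)$ of these, whence $\sum_{\{u,v,w\}}|N(u)\cap N(v)|=o(n^4)$, the sum ranging over the $\Theta(n^3)$ triples inducing $\bar P_3$ (with unique edge $uv$). Therefore at least $\Omega(n^2)$ ordered pairs $(u,v)$ with $v\in N(u)$ satisfy $\deg_{G_n[N(u)]}(v)=|N(u)\cap N(v)|\le cn/2$. On the other hand, by Claim~\ref{inductive} every $\psi=\phi^1\pi^e$ with $\phi^1\in S^1(\phi_0)$ lies in $\Phi$ with $t(\psi)=t-1$; as $\psi(\bar P_3)=0$ it is a limit of complete multipartite graphs, and being extremal it must, by Lemma~\ref{lm:ratios}, be the complete $t$-partite limit with $t-1$ parts of relative size $\gamma=c/\phi^1(e)$ and one of size $1-(t-1)\gamma$. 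Consequently, for a.e.\ $u\in V(G_n)$ the graph $G_n[N(u)]$ is $o(n^2)$-close to a complete $t$-partite graph with $t-1$ parts of size $cn$ and one part of size $d(u)-(t-1)cn\in[0,cn]$ (using $d(u)/n=\phi^1(e)\in[(t-1)c,tc]$, which follows from \eqref{3.11simple}, the identity $\gamma=c/\phi^1(e)$ from the proof of Claim~\ref{KrRec}, and $\gamma\le 1/(t-1)$). Such a graph has minimum degree at least $(t-2)cn\ge cn$, so at most $o(n)$ of its vertices have $G_n[N(u)]$-degree $\le cn/2$; summing over $u$ accounts for only $o(n^2)$ of the pairs above — a contradiction.

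For $t=2$: here $\phi_0(K_4)=\hhh4(a)=0$ (as $\binom24=\binom23=0$) while $\phi_0(K_3)=b=\hh(a)>0$, so a.e.\ $4$-set of $G_n$ containing a triangle is a diamond $K_4-e$; equivalently, for a.e.\ triangle $T=\{a,b,c\}$ every vertex is adjacent to \emph{exactly} two of $a,b,c$. Fixing such a triangle, $V(G_n)$ splits, up to $o(n)$ vertices, into $X_a\sqcup X_b\sqcup X_c$ with $X_a=\{d:d\not\sim a,\ d\sim b,\ d\sim c\}$, etc., and $a\in X_a$, $b\in X_b$, $c\in X_c$. For $d,d'\in X_a$ the set $\{b,c,d\}$ is again a triangle, so $|N(d')\cap\{b,c,d\}|=2$; since $d'\sim b,c$ this forces $d'\not\sim d$, so $X_a$ spans $o(n^2)$ edges, and likewise $X_b,X_c$. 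For $d\in X_a$ and $d'\in X_b$, again $\{b,c,d\}$ is a triangle with $|N(d')\cap\{b,c,d\}|=2$ while $d'\sim c$, $d'\not\sim b$, so $d'\sim d$; hence $o(n^2)$ edges are missing between $X_a$ and $X_b$, and similarly for the other pairs. Thus $G_n$ is $o(n^2)$-close to the complete tripartite graph on $X_a,X_b,X_c$, so $\phi_0(\bar P_3)=0$, contradicting the standing assumption $\phi_0(\bar P_3)>0$. (The bookkeeping of exceptional sets is cleanest in the flag-algebra language, via the random extension $\rn{\phi_0^{K_3}}$, well defined since $\phi_0(K_3)>0$.)

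I expect the case $t\ge3$ to be the crux: it rests on the \emph{quantitative} control of the extremal neighborhood — all its ``large'' parts have size exactly $cn$, and it has at least $(t-1)cn$ vertices — which is precisely what makes its minimum degree $\Omega(n)$ and thereby caps the number of near-isolated vertices inside a typical neighborhood of $G_n$. For $t=2$ this control breaks down, the extremal neighborhood being only a possibly very unbalanced complete bipartite graph, which is why that case instead exploits $K_4$-freeness.
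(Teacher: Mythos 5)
Your argument is genuinely different from the paper's, and substantially longer. The paper's proof is a two-line flag-algebra computation: since $b=\hh(a)$ forces inequality \eqref{3.5} to be tight, and since $\phi_0(\bar P_3)>0$, one gets $\phi_0(\eval{\bar P_3^EK_3^E}E)=\tfrac19 h_t'(a)\phi_0(\bar P_3)>0$; the identity $\eval{\bar P_3^EK_3^E}E=\tfrac14\bar K_{1,3}+\tfrac13\eval{\bar P_3^{(1)}}1$ combined with $\phi_0(\bar K_{1,3})=0$ then gives the claim immediately. You instead argue by contradiction, translating $\phi_0(\eval{\bar P_3^{(1)}}1)=0$ into ``$\psi(\bar P_3)=0$ for a.e.\ $\phi^1$'' and showing this is incompatible with $\phi_0(\bar P_3)>0$ via a structural analysis of neighborhoods. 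Your $t\ge 3$ argument (extremal neighborhoods have min-degree $\ge cn$, but $\Omega(n^3)$ copies of $\bar P_3$ together with $\phi_0(\C P)=\phi_0(\bar K_{1,3})=0$ force $\Omega(n^2)$ adjacent pairs with $\le cn/2$ common neighbors) is correct in outline, and it is a nice observation that the part sizes of the extremal neighborhood are \emph{exactly} $cn$, making the degree bound independent of $d(u)$. This buys combinatorial insight at the cost of a much heavier argument (induced removal, transfer from ``$\rn{\phi_0^1}$-a.e.'' statements to ``all but $o(n)$ vertices $u$'', etc.); the paper's derivation stays entirely inside the algebra and is far shorter.

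Two caveats. First, the transfer from a.e.\ statements about random extensions to quantitative statements about all but $o(n)$ vertices of $G_n$ is used repeatedly but only gestured at; in the paper this kind of step is done more carefully via the weak convergence of the vertex-rooting distribution. Second, and more substantively, your $t=2$ argument has a gap: you fix a ``good'' triangle $T=\{a,b,c\}$ and then, for $d,d'\in X_a$, you invoke ``$\{b,c,d\}$ is again a triangle, so $|N(d')\cap\{b,c,d\}|=2$.'' That inference requires $\{b,c,d\}$ to itself be a good triangle \emph{and} $d'$ to be a good vertex for it, neither of which follows from $T$ being good. It can be repaired — the total number of bad (triangle, fourth-vertex) pairs is $o(n^4)$, so by Markov one can choose $T$ so that each of its three edges lies in only $o(n^2)$ bad pairs, and then each problematic $(d,d')$ maps injectively to a bad pair through the relevant edge of $T$ — but as written the step is not justified. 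The same caveat applies to the symmetric assertion that $o(n^2)$ cross-edges are missing. Since these are fixable, the proposal is salvageable, but it trades the paper's one clean identity for a case analysis requiring visible repair work.
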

\begin{proof}
By Claim~\ref{Kr} we have that $\phi_0(K_4)=\hhh{4}(a)$. When we substitute
this value into \req{3.6} we obtain a tight inequality except
for the extra term involving $\bar K_{1,3}$ (a triangle plus an isolated
vertex).  We conclude that
\begin{equation} \label{bark13}
\phi_0(\bar K_{1,3})=0.
\end{equation}
Inequality~\req{3.5}
is also used in the proof, so it has to be tight. Since we assumed that
$\phi_0(\bar P_3)>0$, we have that
$\phi_0(\eval{\bar
P_3^EK_3^E}{E})>0$, where $\bar P_3^E$ is the unique $E$-flag on $\bar P_3$.
But
 $$
 \eval{\bar P_3K_3^E}{E}= \frac 14\bar K_{1,3}+\frac
13\eval{\bar P_3^{(1)}}{1},
 $$
 and the claim follows.
\end{proof}

The two graphs in Figure \ref{graphs}, called $G_1$ and $G_2$, will play
a special role.
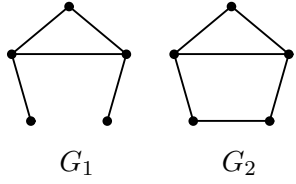
\begin{figure}[tb]
\begin{center}
\setlength{\unitlength}{0.254mm}
\begin{picture}(149,88)(13,-101)
        \special{color rgb 0 0 0}\allinethickness{0.254mm}\special{sh 0.99}\put(25,-75){\ellipse{4}{4}} 
        \special{color rgb 0 0 0}\allinethickness{0.254mm}\special{sh 0.99}\put(65,-75){\ellipse{4}{4}} 
        \special{color rgb 0 0 0}\allinethickness{0.254mm}\special{sh 0.99}\put(75,-40){\ellipse{4}{4}} 
        \special{color rgb 0 0 0}\allinethickness{0.254mm}\special{sh 0.99}\put(15,-40){\ellipse{4}{4}} 
        \special{color rgb 0 0 0}\allinethickness{0.254mm}\special{sh 0.99}\put(45,-15){\ellipse{4}{4}} 
        \special{color rgb 0 0 0}\allinethickness{0.254mm}\path(15,-40)(45,-15) 
        \special{color rgb 0 0 0}\allinethickness{0.254mm}\path(45,-15)(75,-40) 
        \special{color rgb 0 0 0}\allinethickness{0.254mm}\special{sh 0.99}\put(110,-75){\ellipse{4}{4}} 
        \special{color rgb 0 0 0}\allinethickness{0.254mm}\special{sh 0.99}\put(150,-75){\ellipse{4}{4}} 
        \special{color rgb 0 0 0}\allinethickness{0.254mm}\special{sh 0.99}\put(160,-40){\ellipse{4}{4}} 
        \special{color rgb 0 0 0}\allinethickness{0.254mm}\special{sh 0.99}\put(100,-40){\ellipse{4}{4}} 
        \special{color rgb 0 0 0}\allinethickness{0.254mm}\special{sh 0.99}\put(130,-15){\ellipse{4}{4}} 
        \special{color rgb 0 0 0}\allinethickness{0.254mm}\path(100,-40)(130,-15) 
        \special{color rgb 0 0 0}\allinethickness{0.254mm}\path(130,-15)(160,-40) 
        \special{color rgb 0 0 0}\allinethickness{0.254mm}\path(110,-75)(150,-75) 
        \special{color rgb 0 0 0}\allinethickness{0.254mm}\path(15,-40)(25,-75) 
        \special{color rgb 0 0 0}\allinethickness{0.254mm}\path(65,-75)(75,-40) 
        \special{color rgb 0 0 0}\allinethickness{0.254mm}\path(100,-40)(110,-75) 
        \special{color rgb 0 0 0}\allinethickness{0.254mm}\path(160,-40)(150,-75) 
        \special{color rgb 0 0 0}\allinethickness{0.254mm}\path(15,-40)(75,-40) 
        \special{color rgb 0 0 0}\allinethickness{0.254mm}\path(100,-40)(160,-40) 
        \special{color rgb 0 0 0}\put(40,-101){\shortstack{$G_1$}} 
        \special{color rgb 0 0 0}\put(125,-101){\shortstack{$G_2$}} 
        \special{color rgb 0 0 0} 
\end{picture}
\caption{\ \ Exceptional graphs\ \ \label{graphs}}
\end{center}
\end{figure}

\begin{claim}\label{G1G2} $\phi_0(G_1)=\phi_0(G_2)=0$.
\end{claim}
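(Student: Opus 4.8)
The plan is to root everything at the triangle. Let $\sigma=K_3$ be the type on three pairwise adjacent labelled vertices $1,2,3$; since $t\ge 2$ we have $b=\hh(a)>0$, so the random extension $\rn{\phi_0^{K_3}}$ is well defined. For $S\subseteq\{1,2,3\}$ let $F_S\in\C F^{K_3}_4$ be the $K_3$-flag whose unique unlabelled vertex is joined to exactly the roots in $S$, and set $p_S=\rn{\phi_0^{K_3}}(F_S)$. A short computation shows that $\eval{F_{\{1\}}\cdot F_{\{2\}}}{K_3}=c_1G_1+c_2G_2$ for constants $c_1,c_2>0$: in the product the two added vertices are adjacent or not, the non-adjacent case having the bull $G_1$ as underlying graph and the adjacent case the house $G_2$. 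Since $\rn{\phi_0^{K_3}}$ is an algebra homomorphism, $c_1\phi_0(G_1)+c_2\phi_0(G_2)=\phi_0(\eval{F_{\{1\}}F_{\{2\}}}{K_3})=\mathbb E[p_{\{1\}}p_{\{2\}}]$, the expectation taken over $\rn{\phi_0^{K_3}}$. As all the quantities here are non-negative, it suffices to prove that $p_{\{1\}}p_{\{2\}}=0$ almost everywhere; the symmetry of $\rn{\phi_0^{K_3}}$ under permuting the roots then also gives $p_{\{i\}}p_{\{j\}}=0$ for all $i\neq j$. (We also record that $p_\emptyset=0$ a.e.\ by \eqref{bark13}, i.e.\ a.e.\ triangle has every vertex joined to at least one of its vertices.)

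To prove $p_{\{1\}}p_{\{2\}}=0$ a.e.\ I would pass to the neighbourhoods of the roots. A private neighbour of root $1$ (a vertex counted by $p_{\{1\}}$) is exactly a vertex of $N(x_1)$ which, together with the adjacent roots $x_2,x_3$, induces a $\bar P_3$ in $G_n[N(x_1)]$ having $\{x_2,x_3\}$ for its edge; thus $p_{\{1\}}>0$ is equivalent to $\theta^E_1(\bar P_3^E)>0$, where $\theta^E_1$ is the edge-rooted limit of $(G_n[N(x_1)];x_2x_3)$. Now disintegrate the random triangle: first pick $x_1$, then a uniform edge $\{x_2,x_3\}$ of $G_n[N(x_1)]$. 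Since $b>0$ there are $\Theta(n^3)$ triangles while any vertex lies in $O(n^2)$ of them, so the law of $x_1$ is absolutely continuous with respect to the uniform vertex; hence its typical values lie in $S^1(\phi_0)$ and, by Claim~\ref{inductive}, $G_n[N(x_1)]$ converges to some $\psi_1\in\Phi$ with $t(\psi_1(\rho))=t-1$, while $\theta^E_1$ is distributed as the edge-extension of $\psi_1$ and so lies in its support a.e. A member of $\Phi$ is a join $\vee(0,\dots,0,\psi';c,\dots,c,\cdot)$ with $\psi'(K_3)=0$, and every edge of a graph realising it is either internal to the triangle-free factor $\psi'$ or has an endpoint in one of the empty parts; in the latter case it is complete to everything outside that part and supports no induced $\bar P_3$. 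Therefore $p_{\{1\}}>0$ forces the edge $\{x_2,x_3\}$ into the triangle-free part seen from $x_1$, and symmetrically $p_{\{2\}}>0$ forces $\{x_1,x_3\}$ into the triangle-free part seen from $x_2$. Matching these descriptions (they agree on $N(x_1)\cap N(x_2)$, which contains $x_3$) would put all of $x_1,x_2,x_3$ into one triangle-free set — impossible, since they form a triangle. Hence $p_{\{1\}}$ and $p_{\{2\}}$ cannot both be positive.

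The main obstacle is exactly this matching-up: that the triangle-free parts ``seen from'' $x_1$ and $x_2$ are mutually compatible is essentially the global $\Phi$-structure of $\phi_0$ that Theorem~\ref{th:Phi} asserts, so it has to be obtained without circularity. I would do this by working purely with the limit objects and using the induction on $t$: the neighbourhood $G_n[N(x_i)]$ is asymptotically extremal for the $g_3$-problem at its own edge density, with parameter $t-1$ (Claim~\ref{inductive}), so the entire analysis of Section~\ref{sketch} — in particular \eqref{3.3}, \eqref{3.5} and \eqref{3.9} — applies inside it; combined with the global induction hypothesis (Theorem~\ref{th:Phi} for $t-1$) this identifies $\theta^E_i$, after the usual rescaling, with an edge-extension of a $\Phi$-homomorphism, and the explicit join description of $\Phi$ then forces its root edge into the triangle-free factor whenever $\theta^E_i(\bar P_3^E)>0$. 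The contradiction comes from $\{x_1,x_2,x_3\}$ being pushed into that factor from both $x_1$'s and $x_2$'s side. A plausible alternative — and, given the accompanying \emph{Mathematica} file, perhaps the route actually taken — is to combine the identity $\eval{F_{\{1\}}F_{\{2\}}}{K_3}=c_1G_1+c_2G_2$ with the already-tight relations \eqref{3.5}, \eqref{3.9} and $\phi_0(\bar K_{1,3})=0$ to exhibit $\phi_0(G_1)+\phi_0(G_2)$ as a non-positive combination of quantities known to vanish, and to check this mechanically. In either approach, ruling out a triangle with private neighbours at two of its vertices is the heart of the matter.
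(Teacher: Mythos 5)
Your reduction to showing $\rn{\phi_0^{K_3}}(F_{\{1\}})\rn{\phi_0^{K_3}}(F_{\{2\}})=0$ almost everywhere is sound: the identity $\eval{F_{\{1\}}F_{\{2\}}}{K_3}=c_1G_1+c_2G_2$ with $c_1,c_2>0$ is correct (the two unlabelled vertices are non-adjacent in the bull $G_1$ and adjacent in the house $G_2$), and non-negativity then gives the claim once the product vanishes a.e. The gap is exactly where you flagged it, and it is not closable by the route you sketch. Claim~\ref{inductive} (via the global induction) tells you that the limit of $G_n[N(x_1)]$ lies in $\Phi$, and the join description of $\Phi$ does imply, at the level of \emph{limit objects}, that any edge with positive $\bar P_3^E$-density comes from the triangle-free factor. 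But this is a statement about the distribution of the random edge-extension of $\psi_1$, which carries no information about which concrete vertices of $G_n[N(x_1)]$ land in which part, and a fortiori no information relating a partition of $N(x_1)$ to a partition of $N(x_2)$. Passing back to the finite graphs one only gets that each $G_n[N(x_i)]$ is edit-close to \emph{some} graph in $\C H$, and different members of $\C H$ have different ``triangle-free parts''; nothing in the induction hypothesis forces the two approximating partitions to agree on $N(x_1)\cap N(x_2)$. Saying ``matching these descriptions would put $x_1,x_2,x_3$ into one triangle-free set'' presupposes a global, consistent part structure on $V(G_n)$, which is precisely the conclusion of Theorem~\ref{th:Phi}, so the argument is circular.

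The paper resolves the issue exactly along the lines of your parenthetical ``plausible alternative'': it stays entirely within tight flag-algebra relations and never appeals to any structural description of $G_n$. Concretely, it forms a combination $f^E=\frac12P_4^{E,c}-\frac12P_4^{E,b}-F^E$ of $E$-flags each containing $\bar P_3^E$, observes that the tightness of~\req{3.5} makes $\rn{\phi_0^E}(f^E)\ge0$ wherever~\req{3.3} is strict, multiplies~\req{3.3} by $f^E$, multiplies the equality~\req{3.2} by $K_3^1$, combines, and then rewrites the result via two explicit identities so that $\frac1{20}\phi_0(G_1+G_2)$ is bounded above by an expression that vanishes once one substitutes $\phi_0(\bar K_{1,3})=0$ and the clique densities from Claim~\ref{Kr}. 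Note the paper's computation lives in $\C A^E$ and $\C A^1$, not in $\C A^{K_3}$ as your identity does; a computation based on your $K_3$-flag identity might well exist, but you would still have to actually produce and verify it, and you do not. As written the proposal therefore has a genuine gap: the structural ``matching'' step is not available before the theorem is proved, and the alternative flag computation is only gestured at.
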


\begin{proof}
We apply the same strategy (although with much more involved
calculations) as the one used to prove \req{bark13}. Namely,
we make up an analog of \req{3.6} that is
tight on extremal homomorphisms and such
that the ``overall slackness'' involved will cover $G_1$ and $G_2$.

Form the element $f^E\in\scr F^E_4$ as follows:
$$
f^E\df \frac 12P_4^{E,c}-\frac 12P_4^{E,b} -F^E,
$$
where $P_4^{E,c}, P_4^{E,b}, F^E\in\C F_4^E$ are shown on Figure \ref{f4}.
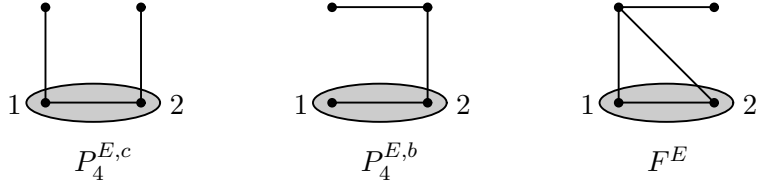
\begin{figure}[tb]
\begin{center}
\setlength{\unitlength}{0.254mm}
\begin{picture}(403,88)(5,-101)
        \special{color rgb 0 0 0}\allinethickness{0.254mm}\special{sh 0.2}\put(200,-65){\ellipse{70}{20}} 
        \special{color rgb 0 0 0}\allinethickness{0.254mm}\special{sh 0.99}\put(175,-15){\ellipse{4}{4}} 
        \special{color rgb 0 0 0}\allinethickness{0.254mm}\special{sh 0.99}\put(175,-65){\ellipse{4}{4}} 
        \special{color rgb 0 0 0}\allinethickness{0.254mm}\special{sh 0.99}\put(225,-15){\ellipse{4}{4}} 
        \special{color rgb 0 0 0}\allinethickness{0.254mm}\special{sh 0.99}\put(225,-65){\ellipse{4}{4}} 
        \special{color rgb 0 0 0}\put(155,-71){\shortstack{$1$}} 
        \special{color rgb 0 0 0}\put(240,-71){\shortstack{$2$}} 
        \special{color rgb 0 0 0}\put(190,-101){\shortstack{$P_4^{E,b}$}} 
        \special{color rgb 0 0 0}\allinethickness{0.254mm}\path(175,-15)(225,-15) 
        \special{color rgb 0 0 0}\allinethickness{0.254mm}\path(175,-65)(225,-65) 
        \special{color rgb 0 0 0}\allinethickness{0.254mm}\path(225,-15)(225,-65) 
        \special{color rgb 0 0 0}\allinethickness{0.254mm}\special{sh 0.2}\put(350,-65){\ellipse{70}{20}} 
        \special{color rgb 0 0 0}\allinethickness{0.254mm}\special{sh 0.99}\put(325,-15){\ellipse{4}{4}} 
        \special{color rgb 0 0 0}\allinethickness{0.254mm}\special{sh 0.99}\put(325,-65){\ellipse{4}{4}} 
        \special{color rgb 0 0 0}\allinethickness{0.254mm}\special{sh 0.99}\put(375,-15){\ellipse{4}{4}} 
        \special{color rgb 0 0 0}\allinethickness{0.254mm}\special{sh 0.99}\put(375,-65){\ellipse{4}{4}} 
        \special{color rgb 0 0 0}\put(305,-71){\shortstack{$1$}} 
        \special{color rgb 0 0 0}\put(390,-71){\shortstack{$2$}} 
        \special{color rgb 0 0 0}\put(340,-101){\shortstack{$F^E$}} 
        \special{color rgb 0 0 0}\allinethickness{0.254mm}\path(325,-15)(325,-65) 
        \special{color rgb 0 0 0}\allinethickness{0.254mm}\path(325,-15)(375,-65) 
        \special{color rgb 0 0 0}\allinethickness{0.254mm}\path(325,-65)(375,-65) 
        \special{color rgb 0 0 0}\allinethickness{0.254mm}\special{sh 0.2}\put(50,-65){\ellipse{70}{20}} 
        \special{color rgb 0 0 0}\allinethickness{0.254mm}\special{sh 0.99}\put(25,-15){\ellipse{4}{4}} 
        \special{color rgb 0 0 0}\allinethickness{0.254mm}\special{sh 0.99}\put(25,-65){\ellipse{4}{4}} 
        \special{color rgb 0 0 0}\allinethickness{0.254mm}\special{sh 0.99}\put(75,-15){\ellipse{4}{4}} 
        \special{color rgb 0 0 0}\allinethickness{0.254mm}\special{sh 0.99}\put(75,-65){\ellipse{4}{4}} 
        \special{color rgb 0 0 0}\put(5,-71){\shortstack{$1$}} 
        \special{color rgb 0 0 0}\put(90,-71){\shortstack{$2$}} 
        \special{color rgb 0 0 0}\put(40,-101){\shortstack{$P_4^{E,c}$}} 
        \special{color rgb 0 0 0}\allinethickness{0.254mm}\path(25,-15)(25,-65) 
        \special{color rgb 0 0 0}\allinethickness{0.254mm}\path(25,-65)(75,-65) 
        \special{color rgb 0 0 0}\allinethickness{0.254mm}\path(75,-15)(75,-65) 
        \special{color rgb 0 0 0}\allinethickness{0.254mm}\path(325,-15)(375,-15) 
        \special{color rgb 0 0 0} 
\end{picture}
\caption{\ \ Some $E$-flags\ \ \label{f4}}
\end{center}
\end{figure}
Since \req{3.5} is tight,
$$
\rn{\phi_0^E}(K_3^E) <\frac 13h_t'(a)\quad \Longrightarrow\quad \rn{\phi_0^E}(\bar P_3^E)=0\ \text{a.e.}
$$
and, since both $P_4^{E,b}$ and $F^E$ contain $\bar P_3^E$, this implies that
 \beq{fEK3E}
\rn{\phi_0^E}(K_3^E) <\frac 13h_t'(a)\quad \Longrightarrow\quad
\rn{\phi_0^E}(f^E)\geq 0\ \text{a.e.}
 \eeq
(Recall that $h_t$ is just the restriction of $\hh$ to the interval $[1-\frac
1t, 1-\frac 1{t+1}]$
as defined by~\req{ht}.) Thus, by~\req{3.3}, we can multiply the left-hand side of~\req{fEK3E} by $f^E$,
obtaining a true inequality. If we apply the averaging operator $\eval{\dots}E$ to this new
inequality, we get that
\begin{equation} \label{e_relation}
\phi_0(\eval{f^EK_3^E}{E}) \leq \frac 13h_t'(a)\phi_0(\eval{f^E}{E}).
\end{equation}
Next, similarly to \cite[(3.4)]{razborov:08} but multiplying
\cite[(3.2)]{razborov:08} (i.e.\ our formula~\req{3.2} which is equality a.e.)
by $K_3^1$ rather than by $e$, we obtain
\begin{equation} \label{one_relation}
\phi_0(3\eval{(K_3^1)^2}{1}-2h_t'(a)\eval{eK_3^1}{1})= b(3b-2ah_t'(a)).
\end{equation}
Subtracting \req{one_relation} from \req{e_relation} multiplied by 3, and re-grouping terms, we obtain
\begin{equation} \label{id4}
3\phi_0(\eval{f^EK_3^E}{E}-\eval{(K_3^1)^2}{1}) + h_t'(a)\phi_0(2\eval{eK_3^1}{1}-\eval{f^E}{E})\leq b(2ah_t'(a) - 3b).
\end{equation}

But we also have
\begin{equation} \label{id5}
2\eval{eK_3^1}{1}-\eval{f^E}{E} =\frac 43K_3+\frac 23K_4-\frac 13\bar K_{1,3}
\end{equation}
and
\begin{equation} \label{id6}
\eval{f^EK_3^E}{E}-\eval{(K_3^1)^2}{1} \geq \frac 1{60}(G_1+G_2) -\left(\frac 12K_4+\frac 13\rho K_3+\frac 16K_5\right).
\end{equation}
Substituting these relations into \req{id4}, and using Claim \ref{Kr},
we conclude by~\req{bark13} that
\begin{eqnarray*}
\frac 1{20}\phi_0(G_1+G_2) &\leq& b(2ah_t'(a)-3b)
 -h_t'(a)\left(\frac 43b+\frac 23\,\hhh{4}(a)\right)\\
 &+&\left(\frac 32\,\hhh{4}(a)+ab+\frac 12\, \hhh{5}(a)\right)\ =\ 0.
\end{eqnarray*}
Claim \ref{G1G2} is proved.
\end{proof}

\begin{lemma}\label{lm:5comb} Let $G$ be a graph on
$V=\{x_1,x_2,x_3,y,z\}$ with the following properties.
The vertices $x_1,x_2,x_3$ induce $\bar P_3$ with
$x_1x_2\in E(G)$, $y$ is adjacent to each $x_i$ and $z$
is non-adjacent to at least one $x_i$.

If $yz\not\in E(G)$, then $G$ contains
$\bar K_{1,3}$ as  an induced subgraph or $G$ is isomorphic to
$G_1$ or $G_2$.\end{lemma}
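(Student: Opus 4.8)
The plan is a short finite case analysis on which of $x_1,x_2,x_3$ are adjacent to $z$. Every other adjacency among $V=\{x_1,x_2,x_3,y,z\}$ is already pinned down by the hypotheses: $x_1x_2\in E(G)$, $x_1x_3,x_2x_3\notin E(G)$, $yx_1,yx_2,yx_3\in E(G)$, and $yz\notin E(G)$. Hence $G$ is completely determined by the set $N(z)\cap\{x_1,x_2,x_3\}$, and the assumption that $z$ misses some $x_i$ rules out only the value $\{x_1,x_2,x_3\}$. Since swapping $x_1$ with $x_2$ is an automorphism of the already-fixed part of $G$, it suffices to treat the five cases $N(z)\cap\{x_1,x_2,x_3\}\in\{\emptyset,\{x_3\},\{x_1\},\{x_1,x_2\},\{x_1,x_3\}\}$.

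First I would dispose of the three cases that produce an induced $\bar K_{1,3}$ (a triangle plus an isolated vertex). If $z$ is adjacent to neither $x_1$ nor $x_2$, i.e.\ in the cases $\emptyset$ and $\{x_3\}$, then $z$ is non-adjacent to each of $y,x_1,x_2$, so $\{y,x_1,x_2,z\}$ induces the triangle $yx_1x_2$ together with the isolated vertex $z$. If $N(z)\cap\{x_1,x_2,x_3\}=\{x_1,x_2\}$, then $x_3$ is non-adjacent to each of $z,x_1,x_2$, so $\{z,x_1,x_2,x_3\}$ induces the triangle $zx_1x_2$ together with the isolated vertex $x_3$. Either way $G$ contains $\bar K_{1,3}$ as an induced subgraph.

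In the two remaining cases I would identify $G$ explicitly by writing out its edge set. If $N(z)\cap\{x_1,x_2,x_3\}=\{x_1\}$, then $E(G)=\{x_1x_2,yx_1,yx_2,yx_3,zx_1\}$; this is the triangle $yx_1x_2$ with one pendant edge at $y$ (namely $yx_3$) and one pendant edge at $x_1$ (namely $x_1z$), which, comparing with Figure~\ref{graphs}, is exactly $G_1$. If $N(z)\cap\{x_1,x_2,x_3\}=\{x_1,x_3\}$, then $E(G)$ is the same edge set together with $zx_3$, i.e.\ $G_1$ with an extra edge joining its two pendant vertices $x_3$ and $z$, which is exactly $G_2$. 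These identifications are forced: $G_1$ has degree sequence $(1,1,2,3,3)$ and $G_2$ has degree sequence $(2,2,2,3,3)$, in each case the two vertices of degree $3$ are adjacent and must correspond to $y$ and $x_1$, and then the remaining correspondence is uniquely determined. Since the five cases exhaust all possibilities, the lemma follows.

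I do not expect a genuine obstacle here: the statement is a bounded enumeration, and each case becomes immediate once the fixed part of $G$ is recorded. The only points needing a little care are invoking the $x_1\leftrightarrow x_2$ symmetry to keep the case list short, and, in the last two cases, reading off the $5$-vertex graph correctly as $G_1$ (resp.\ $G_2$) of Figure~\ref{graphs} rather than some relabelling. (As a consistency check, one may also note that neither $G_1$ nor $G_2$ itself contains an induced $\bar K_{1,3}$, so the trichotomy in the conclusion is genuine.)
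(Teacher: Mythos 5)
Your proof is correct and follows essentially the same case analysis as the paper: the paper groups cases by whether $zx_1,zx_2$ are both present, both absent, or mixed, which coincides with your enumeration of $N(z)\cap\{x_1,x_2,x_3\}$ after using the $x_1\leftrightarrow x_2$ symmetry. The identifications of the resulting $5$-vertex graphs with $\bar K_{1,3}$, $G_1$, and $G_2$ are all accurate.
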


\begin{proof} If $zx_1,zx_2\in E(G)$, then $zx_3\not\in E(G)$ and
$G-y\cong \bar K_{1,3}$. If $zx_1,zx_2\not\in E(G)$, then $G-x_3\cong
\bar K_{1,3}$. So we can assume without loss of generality that $zx_1\in
E(G)$ and
$zx_2\not\in E(G)$. Now, if $zx_3\not\in  E(G)$, then $G$ is isomorphic to
$G_1$;
otherwise $G\cong G_2$.
\end{proof}

Now we are ready to put everything together. The next argument would look
particularly simple and elegant in genuinely flag-algebraic notation, but
it would require introducing some more notions and techniques, notably
upward operators (\cite[Section 2.3.1]{razborov:07}) and relating
extensions for different types (\cite[Theorem 3.17]{razborov:07}). We prefer
not to indulge into this endeavor in the concluding part of our paper, so we
replace this with (admittedly, crude) translation to the finite world.

Let $\sigma$ be the 3-vertex type whose graph is $\bar P_3$ with labels $1$
and $2$ being adjacent.
Let $\{G_n\}$ converge to $\phi_0$ with $|V(G_n)|=n$.
By Claim~\ref{p3}, $G_n$ has
$\Omega(n^4)$
copies of $F_0\in\C F_4^0$, which denotes a triangle
with a pendant edge. Let $F_1\in \C F_4^1$ be obtained from $F_0$
by putting label 1 on a vertex of degree $2$. Let $F_3\in \C
F_4^\sigma$ be the (unique)
$\sigma$-flag that can be obtained from $F_1$ by adding labels 2 and 3.

Fix small positive constants $\e\gg \delta$. Let $X=\{x_1\in V(G_n)\mid
p(F_1,(G_n;x_1))>\e\}$. By counting
copies of $F_0$ in $G_n$, we conclude that
 $$
 2(\phi(F_0)+o(1)){n\choose 4}\le |X|  {n-1\choose 3}+(n-|X|)\e {n-1\choose 3},
 $$
 implying that $|X|\ge 2\e n$. An easy
counting shows that for every $x_1\in X$ there are
at least $\delta n^2$ pairs $(x_2,x_3)$ of vertices with
$p(F_3,(G_n;x_1,x_2,x_3))\ge \delta$.
Likewise, by~\req{3.11simple}, the set $Y=\{x_1\in
V(G_n)\mid p(e,(G_n;x_1))<
1-\e\}$ has size at least $(1-\e)n$. Thus $|X\cap Y|\ge \e n$
and there are at least $\e n\cdot  \delta n^2$ choices of $(x_1,x_2,x_3)$
such that $x_1\in X\cap Y$ and
$p(F_3,(G_n;x_1,x_2,x_3))\ge \delta$. Given such a triple, let
$V_1$ consist of all vertices of $G_n$ adjacent to all of
$x_1,x_2,x_3$ and let $V_2=V(G_n)\setminus V_1$.
We have $|V_1|\ge \delta(n-3)$.
Since $x_1\in Y$, we have
$|V_2|\ge \e(n-1)$
(note that all non-neighbors
of $x_1$ are in $V_2$). For each non-adjacent
$y\in V_1$ and $z\in V_2$, the $5$-set $\{x_1,x_2,x_3,y,z\}$
contains $G_1$, $G_2$ or $\bar K_{1,3}$ by Lemma~\ref{lm:5comb}.
By~\req{bark13} and Claim~\ref{G1G2}, each of these graphs has density
$o(1)$ in $G_n$. Thus there is a triple $(x_1,x_2,x_3)$ with $e(\bar
G[V_1,V_2])=o(n^2)$.

Fix one such choice. By taking a subsequence, we can assume
that $|V_i|/n$ tends to a limit $\alpha_i$ and that
$G_n[V_i]$ converges to some homomorphism $\phi_i$, for $i=1,2$.
Now, $\phi_0=\vee(\phi_1,\phi_2,\alpha_1,\alpha_2)$, where $\alpha_1\ge \delta$
and $\alpha_2\ge \e$ are bounded away from 0.

Let $i=1$ or $2$.
Each $\phi_i$ is an extremal homomorphism: for example, if there is
$\phi_1'$ with $\phi_1'(\rho)=\phi_1(\rho)$ and $\phi_1'(K_3)<
\phi_1(K_3)$, then $\vee(\phi_1',\phi_2,\alpha_1,\alpha_2)$ contradicts
the extremality of $\phi_0$. Since
$\phi_0(K_{t+2})=0$ and $\alpha_{3-i}>0$, we have $\phi_i(K_{t+1})=0$
for $i=1,2$.
Tur\'an's theorem implies
that $\phi_i(\rho)\le 1-\frac1t$. Thus we can apply
the (global) induction and conclude that  $\phi_i\in \Phi$.

We have proved so far that $\phi_0$ is a join of two elements from $\Phi$; in
particular, it has the form
\begin{equation} \label{phi0}
\phi_0=\vee(\underbrace{0,\dots,0}_{k \mbox{ \scriptsize times}},\psi_1,\psi_2; c_1,\ldots, c_k,
d_1, d_2),\quad \mbox{with }c_1,\ldots,c_k>0,
\end{equation}
where $\psi_1(K_3)=\psi_2(K_3)=0$. Let $\psi_i' \df \vee(0,0; p_i,1-p_i)$, where $p_i\leq 1/2$
satisfies $2p_i(1-p_i)=\psi_i(\rho)$. Since $\psi_i'(\rho) = \psi_i(\rho)$ and $\psi'(K_3)=\psi(K_3)\
(=0)$, after plugging $\psi_i'$ for $\psi_i$ into $\phi_0$, we will get another {\em extremal}
homomorphism
\begin{equation} \label{phiprime}
\phi_0'\df \vee(\underbrace{0,\dots,0}_{k+4 \mbox{ \scriptsize times}}; c_1,\ldots, c_k,
d_1p_1,d_1(1-p_1), d_2p_2, d_2(1-p_2)).
\end{equation}
The equality $\phi_0'(\bar P_3)=0$, as we already proved before, implies $\phi_0'\in\Phi$, that is, all non-zero weights in
\req{phi0} are equal except for possibly one that is allowed to be smaller than others. But
$\phi_0(\bar P_3)>0$ which implies that for at least one $\psi_i$, say, $\psi_1$, we have $d_1>0$ and
$0<p_1<1/2$. This already creates the exceptional weight $d_1p_1$ in \req{phiprime}; all others weights must
lie in $\{0, d_1(1-p_1)\}$. In particular, either $d_2=0$ or $p_2\in \{0,1/2\}$; in the first case $\psi_2$
can be crossed out from \req{phi0}, and in the second case $\psi_2=\psi_2'$ and it can be merged with the
first $k$ terms. Thus, $\phi_0\in\Phi$.

This finishes the proof of Theorem~\ref{th:Phi}.

\newcommand{\etalchar}[1]{$^{#1}$}
\providecommand{\bysame}{\leavevmode\hbox to3em{\hrulefill}\thinspace}
\providecommand{\MR}{\relax\ifhmode\unskip\space\fi MR }
\providecommand{\MRhref}[2]{%
  \href{http://www.ams.org/mathscinet-getitem?mr=#1}{#2}
}
\providecommand{\href}[2]{#2}

\end{document}